\newtheorem{proposition}{Proposition}
\newtheorem{lemma}[proposition]{Lemma}
\newtheorem{theorem}[proposition]{Theorem}
\theoremstyle{definition}
\newtheorem{example}[proposition]{Example}
\newtheorem{definition}[proposition]{Definition}
\newcommand{\RR}{\mathbb{R}}
\def\cQ{\mathcal{Q}}
\definecolor{damjana}{rgb}{.8,.2,.2}
\def\RR{\mathds R}
\def\II{\mathds I}
\def\id{\mathrm{id}}
\def\CC{\mathcal C}
\begin{document}

\title{On the exact region determined by Spearman's rho and Spearman's footrule}

\author[D. Kokol Bukov\v sek]{Damjana Kokol Bukov\v{s}ek}
\address{School of Economics and Business, University of Ljubljana, and Institute of Mathematics, Physics and Mechanics, Ljubljana, Slovenia}
\email{damjana.kokol.bukovsek@ef.uni-lj.si}

\author[N. Stopar]{Nik Stopar}
\address{Faculty of Electrical Engineering, University of Ljubljana, and Institute of Mathematics, Physics and Mechanics, Ljubljana, Slovenia}
\email{nik.stopar@fe.uni-lj.si}

\begin{abstract}
We determine the lower bound for possible values of Spearman's rho of a bivariate copula given that the value of its Spearman's footrule is known and show that this bound is always attained.
We also give an estimate for the exact upper bound and prove that the estimate is attained for some but not all values of Spearman's footrule. Nevertheless, we show that the estimate is quite tight.
\end{abstract}

\thanks{Both authors acknowledge financial support from the Slovenian Research Agency (research core funding No. P1-0222).}
\keywords{Copula; dependence; concordance measure; Spearman's rho; Spearman's footrule}
\subjclass[2020]{62H20, 62H05, 60E05}

\maketitle

\section{ Introduction }

Intuitively, two continuous random variables $X$ and $Y$ are in concordance when large values of $X$ occur simultaneously with large values of $Y$. More precisely, two realisations $(x_1,y_1)$ and $(x_2,y_2)$ of the random vector $(X,Y)$ are concordant when $(x_2-x_1)(y_2-y_1) >0$ and they are discordant when $(x_2-x_1)(y_2-y_1) <0$. We can measure the concordance of a pair of random variables $(X,Y)$ in various ways, see \cite{Scar}. A concordance measure is often a better way to model dependence than Pearson's correlation coefficient since it is invariant with respect to monotone increasing transformations of the random variables. Because of this invariance, the concordance of a random vector $(X,Y)$ is uniquely determined by its copula, which is given by $$C(u,v)=H(F^{(-1)}(u), G^{(-1)}(v)),$$
where $H$ is a joint distribution function of $(X,Y)$, $F,G$ are univariate distribution functions of random variables $X$ and $Y$, respectively, and $F^{(-1)}$, $G^{(-1)}$ are their generalised inverses.

Due to their importance in statistical analysis, which is based on their connection to measures for the degree of association between two random variables, concordance measures have been studied intensively since their introduction.
Recent references for bivariate concordance measures include \cite{EdTa,FrNe,FuSch,KoBuKoMoOm3,Lieb,NQRU,NQRU2} and their multivariate generalizations were studied in \cite{BeDoUF2,DuFu,Tayl,UbFl}, to name just a few.
Given their widespread use in a variety of practical applications, it is natural to compare different concordance measures in terms of the values that they can attain. In particular, if a value of one measure is known, we may ask what are the possible values of the other measures.
Here we only give a brief overview of the known results regarding this question, and for explicit formulas we refer the reader to the papers referenced below or to \cite{KoBuMo}, where all the formulas are collected in one place.
The investigation of the above question was started by Daniels~\cite{Dani} and Durbin and Stuart~\cite{DuSt}, who compared Spearman's rho and Kendall's tau and gave some estimates for the values of the two measures.
The exact region of all possible pairs of values $(\tau(C),\rho(C))$, $C \in \CC$, was only determined recently in \cite{ScPaTr}. 
The regions determined by Blomqvist's beta and the other three concordance measures (Spearman's rho, Kendall's tau, and Gini's gamma) are given in \cite{Nels} as an exercise for the reader, while the region determined by Blomqvist's beta and Spearman's footrule was given in \cite{KoBuKoMoOm3}.  The region determined by Spearman's footrule and Gini's gamma was given in \cite{KoBuMo}.
As far as we know, the regions determined by other pairs of (weak) concordance measures are not yet known.

In this paper we investigate the relation between Spearman's rho and Spearman's footrule of a bivariate copula.
We  determine the exact lower bound for the value of Spearman's rho if the value of Spearman's footrule is known. The determination of the exact upper bound seems to be quite difficult, but we are able to give a tight estimate for it.

The paper is structured as follows. In Section~\ref{sec:prelim} we give some basic definitions that will be used throughout the paper.
In Section~\ref{sec:symmetric} we define doubly symmetric shuffles of $M$ and prove that any doubly symmetric copula can be approximated by a doubly symmetric shuffle of $M$.
Sections~\ref{sec:special} and \ref{sec:general} are devoted to determining the exact lower bound for Spearman's rho in terms of Spearman's footrule and the corresponding upper bound is considered in Section~\ref{sec:upper}.
We give an estimate for the exact upper bound and prove that the estimate is attained for some but not all values of Spearman's footrule. Nevertheless, we show that the estimate is quite tight.
In Section~\ref{sec:region} we estimate the similarity measure between Spearman's footrule and Spearman's rho.

\section{Preliminaries on concordance measures}\label{sec:prelim}

Let $\II=[0,1]$ be the unit interval and $B=[u_1,u_2]\times [v_1,v_2]$ a rectangle contained in $\II^2$ with $u_1 \le u_2$ and $v_1 \le v_2$.
Given a real function $H \colon \II^2 \to \RR$, we define the $H$-volume of rectangle $B$ by
$V_H(B)=H(u_2,v_2)-H(u_2,v_1)-H(u_1,v_2)+H(u_1,v_1)$.
A {\it bivariate copula} is a function $C:\II^2 \to \II$ with the following properties:
\begin{enumerate}[(i)]
    \item $C(0,v)=C(u,0)=0$ for all $u,v \in \II$ ($C$ is {\it grounded}),
    \item $C(u,1)=u$ and $C(1,v)=v$ for all $u,v \in \II$ ($C$ has uniform marginals), and
    \item $V_H(B) \geq 0$ for every rectangle $B \subseteq \II^2$ ($C$ is {\it $2-$increasing}).
\end{enumerate}
Since we will only be dealing with the bivariate setting, we will often omit the adjective and simply call such functions copulas.
The set of all bivariate copulas will be denoted by $\CC$. It is well known that this set is compact in the sup norm.

Let us introduce some standard transformations that are naturally defined on $\CC$ and are induced by reflections of the unit square $\II^2$. We denote by $C^t$ the transpose of the copula $C$, i.e., $C^t(u,v)= C(v,u)$, which is induced by the reflection over the main diagonal.
A copula $C$ that satisfies the condition $C=C^t$ is called \emph{symmetric}.
The two reflections $\sigma_1 \colon (u,v) \mapsto (1-u,v)$ and $\sigma_2 \colon (u,v) \mapsto (u,1-v)$ induce reflections $C^{\sigma_1}$ and $C^{\sigma_2}$ of the copula $C$, which are defined by $C^{\sigma_1}(u,v) =v-C(1-u,v)$ and $C^{\sigma_2}(u,v)=u-C(u,1-v)$ (see \cite[{\S}1.7.3]{DuSe}), and are again copulas.
If we apply both reflections to $C$, we obtain the survival copula of $C$, which we denote by $\widehat{C}=\left(C^{\sigma_1}\right)^{\sigma_2}=\left(C^{\sigma_2}\right)^{\sigma_1}$.
It is induced by the reflection $(u,v) \mapsto (1-u,1-v)$ and is given by $\widehat{C}(u,v)=u+v-1+C(1-u,1-v)$.

Given two copulas $C$ and $D$ we denote $C\le D$ if $C(u,v)\leqslant D(u,v)$ for all $(u,v)\in\II^2$. This is the so-called \textit{pointwise order} of copulas.
The set $\CC$ equipped with the pointwise order is a partially ordered set, but not a lattice \cite[Theorem 2.1]{NeUbF2}. For any copula $C$ we have $W \le C \le M$, where $W(u,v)=\max\{0,u+v-1\}$ and $M(u,v)=\min\{u,v\}$ are the lower and upper \textit{Fr\'echet-Hoeffding bounds} for the set of all copulas.

Formal axioms for a concordance measure were introduced by Scarsini \cite{Scar} (see also \cite{DoUbFl} and \cite{Tayl2} for multivariate versions).
A mapping $\kappa:\CC\to [-1,1]$ is called a \textit{concordance measure} if it satisfies the following properties (see \cite[Definition 2.4.7]{DuSe}):
\begin{enumerate}[(C1)]
\item $\kappa(C)=\kappa(C^t)$ for every $C\in\CC$.
\item $\kappa(C)\leqslant\kappa(D)$ when $C\leqslant D$.  \label{monotone}
\item $\kappa(M)=1$.
\item $\kappa(C^{\sigma_1})=\kappa(C^{\sigma_2})=-\kappa(C)$.
\item If a sequence of copulas $C_n$, $n\in\mathbb{N}$, converges uniformly to $C\in\CC$, then $\displaystyle\lim_{n\to\infty}\kappa(C_n)=\kappa(C)$.
\end{enumerate}
If a sequence of copulas converges pointwise to a function $C$, then $C$ is a copula and the sequence converges uniformly to $C$ (see \cite{DuSe} for details).
Hence, in axiom (C5) we may replace the uniform convergence requirement with the pointwise convergence condition without loss of generality.
Any concordance measure automatically satisfies also the following additional properties, which are sometimes stated as part of the definition, but are actually consequences of conditions (C1)-(C5) (see \cite[\S{3}]{KoBuKoMoOm2} for more details):
\begin{enumerate}[(C1)]\setcounter{enumi}{5}
\item $\kappa(\Pi)=0$, where $\Pi$ is the independence copula $\Pi(u,v)=uv$. \label{kappa_Pi}
\item $\kappa(W)=-1$.\label{kappaW}
\item $\kappa(C)=\kappa(\widehat{C})$ for every $C\in\CC$.
\end{enumerate}

Many of the most important bivariate concordance measures can be expressed with the so called \emph{concordance function} $\cQ$, introduced by Kruskal \cite{Krus} (see also \cite{BeDoUF,EdMiTa,EdMiTa2}), which measures the difference between the probabilities of concordance and discordance of two pairs of random variables.
If $(X_1,Y_1)$ and $(X_2,Y_2)$ are pairs of continuous random variables $X_1$, $X_2$, $Y_1$, and $Y_2$,
then the concordance function of random vectors $(X_1,Y_1)$ and $(X_2,Y_2)$ depends only on the corresponding copulas $C_1$ and $C_2$ and is given by (see \cite[Theorem 5.1.1]{Nels})
\begin{equation}\label{concordance} \cQ=\cQ(C_1,C_2)= 4 \int_{\II ^2} C_2(u,v)dC_1(u,v) -1.
\end{equation}
It turns out that the concordance function is symmetric in its arguments, i.e., $\cQ(C_1,C_2)=\cQ(C_2,C_1)$, and has several other useful properties, see \cite[Corollary 5.1.2]{Nels} and \cite[\S{3}]{KoBuKoMoOm2}.

The four most commonly used concordance measures of a copula $C$ are Spearman's rho, Kendall's tau, Gini's gamma, and Blomqvist's beta. The first three can be defined in terms of the concordance function $\cQ$.
The \textit{Spearman's rho} is defined by
\begin{equation}\label{rho}
\rho(C)=3\,\cQ(C,\Pi)=12\int_{\II ^2} C(u,v) dudv - 3,
\end{equation}
\textit{Kendall's tau} by
\begin{equation}\label{tau}
\tau(C)=\cQ(C,C)= 4\int_{\II ^2} C(u,v) dC(u,v) - 1,
\end{equation}
and \textit{Gini's gamma} by
\begin{equation}\label{gamma}
\gamma(C)=\cQ(C,M)+\cQ(C,W) = 4\int_0^1 C(u,u) du + 4\int_0^1 C(u,1-u) du - 2.
\end{equation}
On the other hand, \textit{Blomqvist's beta} is defined by
\begin{equation}\label{beta}
\beta(C)=4\,C\left(\frac12,\frac12\right)-1.
\end{equation}
We refer the reader to \cite[{\S}2.4]{DuSe} and \cite[Ch. 5]{Nels} for more details on these measures.

In 2014 Liebscher \cite{Lieb} considered measures that are slightly more general than concordance measures, in paritcular, if we replace property (C4) with property (C6) in the definition of a concordance measure, we get what Liebscher calls \textit{weak concordance measure}.
The most important example of a weak concordance measure is the \textit{Spearman's footrule} defined by
\begin{equation}\label{phi}
\phi(C)=\frac12\left(3\cQ(C,M)-1\right)=6\int_0^1 C(u,u) du - 2.
\end{equation}
While the range of any concordance measure is the interval $[-1,1]$, the range of a weak concordance measure may be different. For example, the range of Spearman's footrule is the interval $\left[-\frac12,1\right]$ (see \cite[\S4]{UbFl}).
Note that Spearman's rho, Kendall's tau, Gini's gamma and Blomqvist's beta are concordance measures, hence, they satisfy conditions (C1)-(C8). On the other hand, Spearman's footrule only satisfies conditions (C1)-(C3), (C5)-(C6) and (C8).

All five (weak) concordance measures mentioned above are well established in statistical literature and their statistical meaning has been investigated in detail. See e.g. \cite{KGJT,NSB,SMB,Wys} for Spearman's rho, \cite{FuSch2,JaMa,KGJT,Wys} for Kendall's tau, \cite{CoNi,GeNeGh,Nels1998} for Gini's gamma, \cite{UbFl} for Blomqvist's beta and \cite{DiaGra,GeNeGh,UbFl,SSQ} for Spearman's footrule.

The known exact regions determined by pairs of (weak) concordance measures are shown in  Figures~\ref{fig:tau-ro} and \ref{fig:beta}.

\begin{figure}[!ht]
    \centering
    \includegraphics[height=5cm]{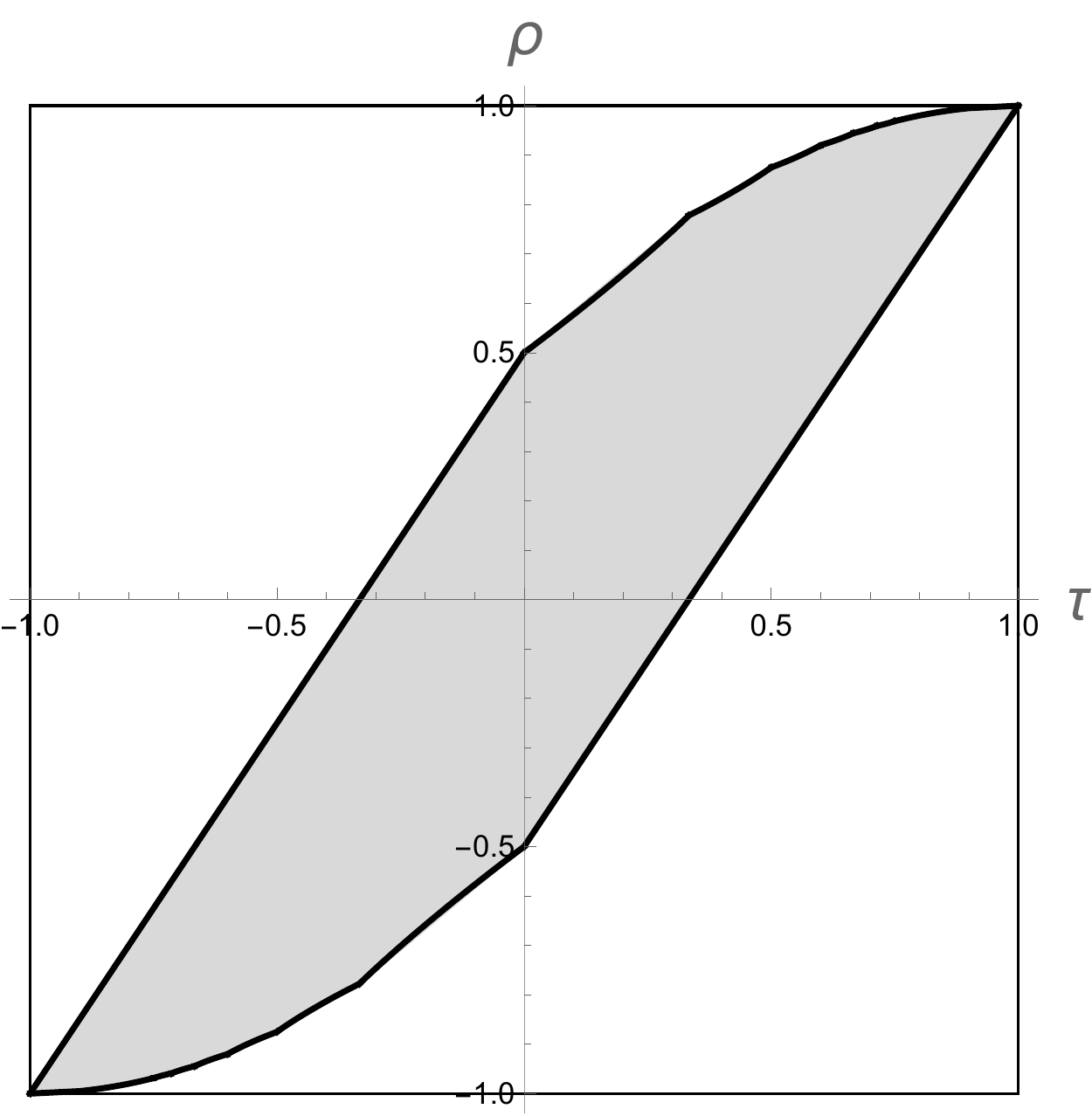} \qquad
    \includegraphics[height=5cm]{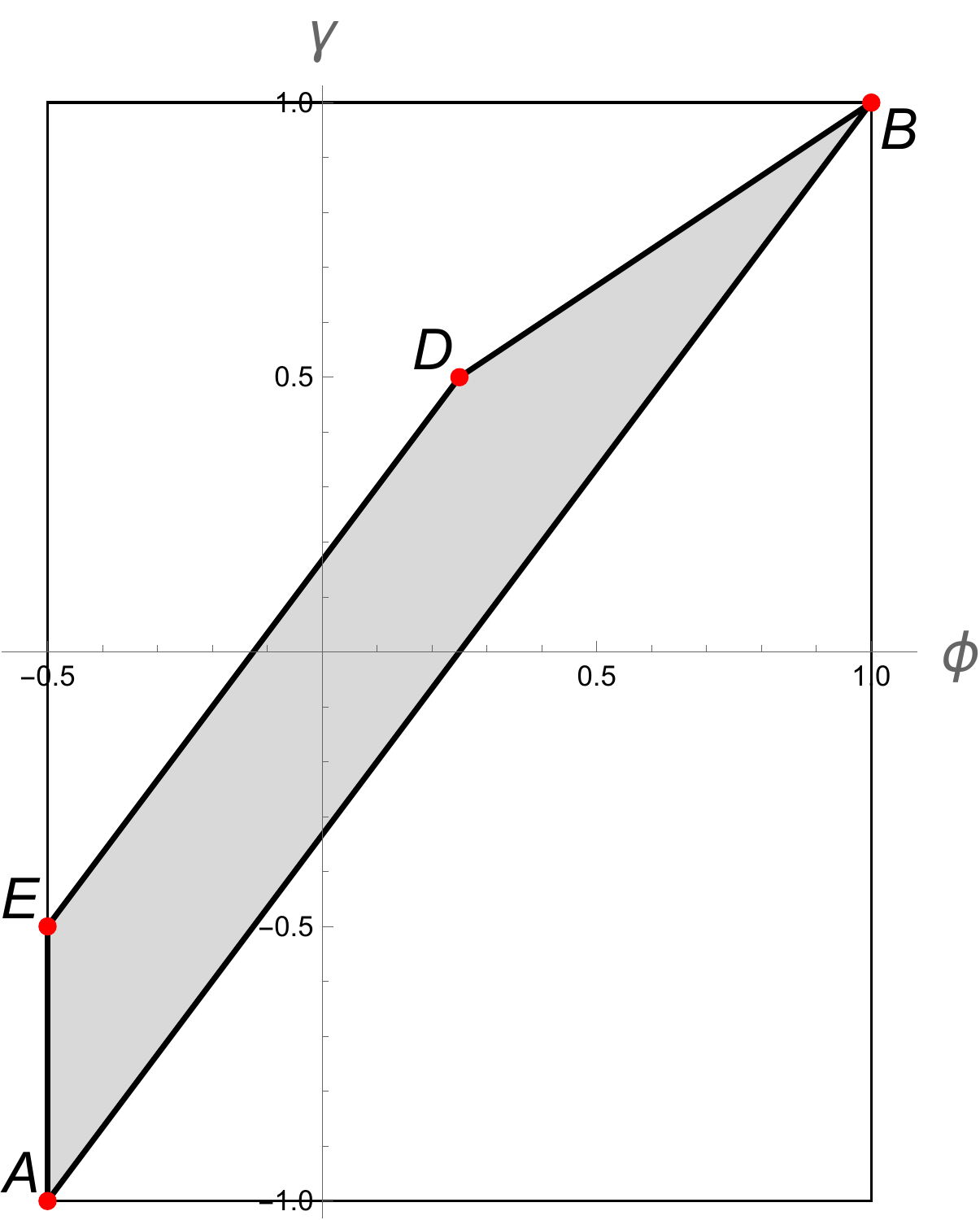}
    \caption{The exact region determined by Kendall’s tau and Spearman’s rho (left) and by Spearman’s footrule and Gini’s gamma (right).}
    \label{fig:tau-ro}
\end{figure}

\begin{figure}[!ht]
    \centering
    \includegraphics[width=5cm]{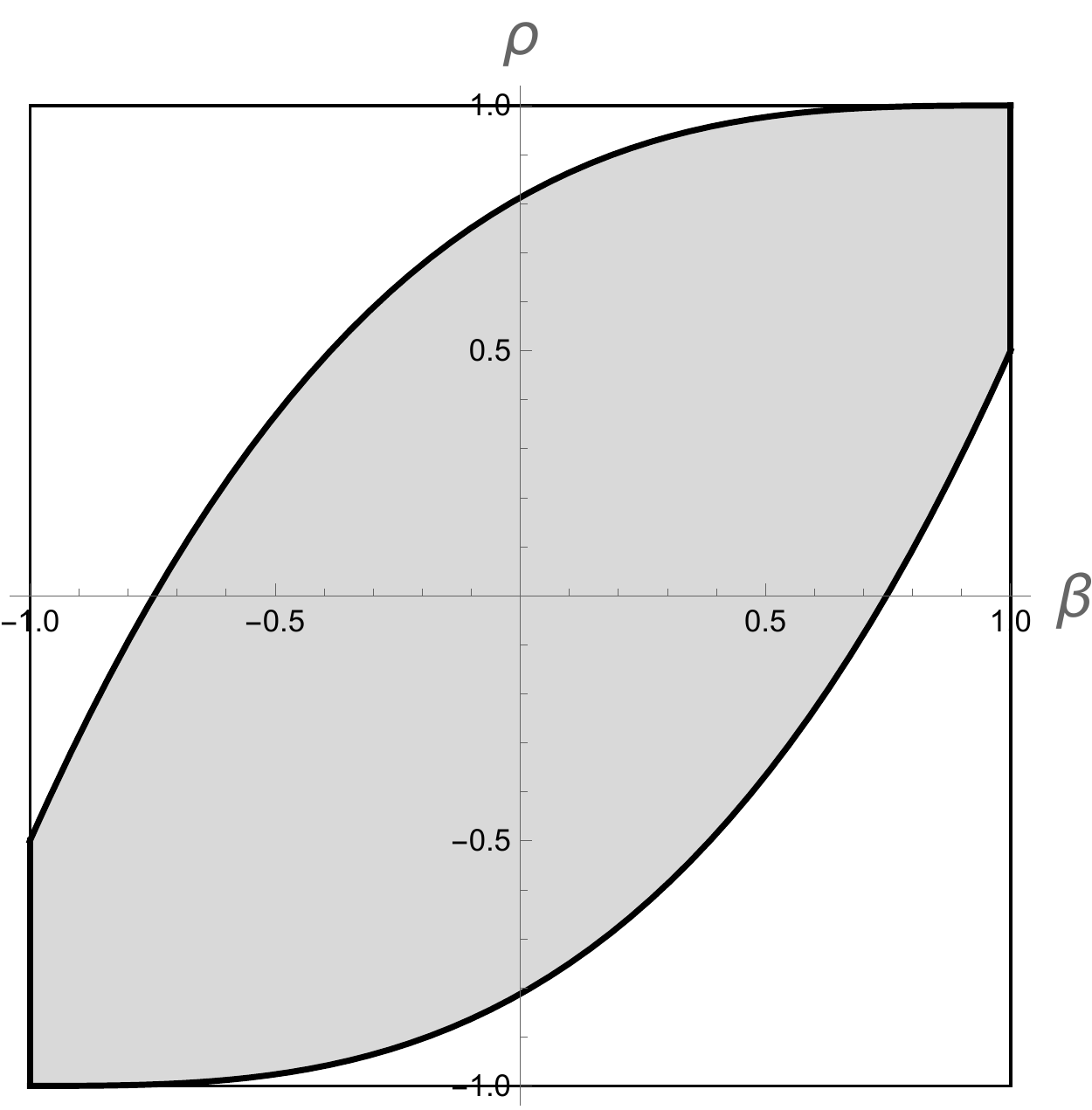} \hspace{4mm}
    \includegraphics[width=5cm]{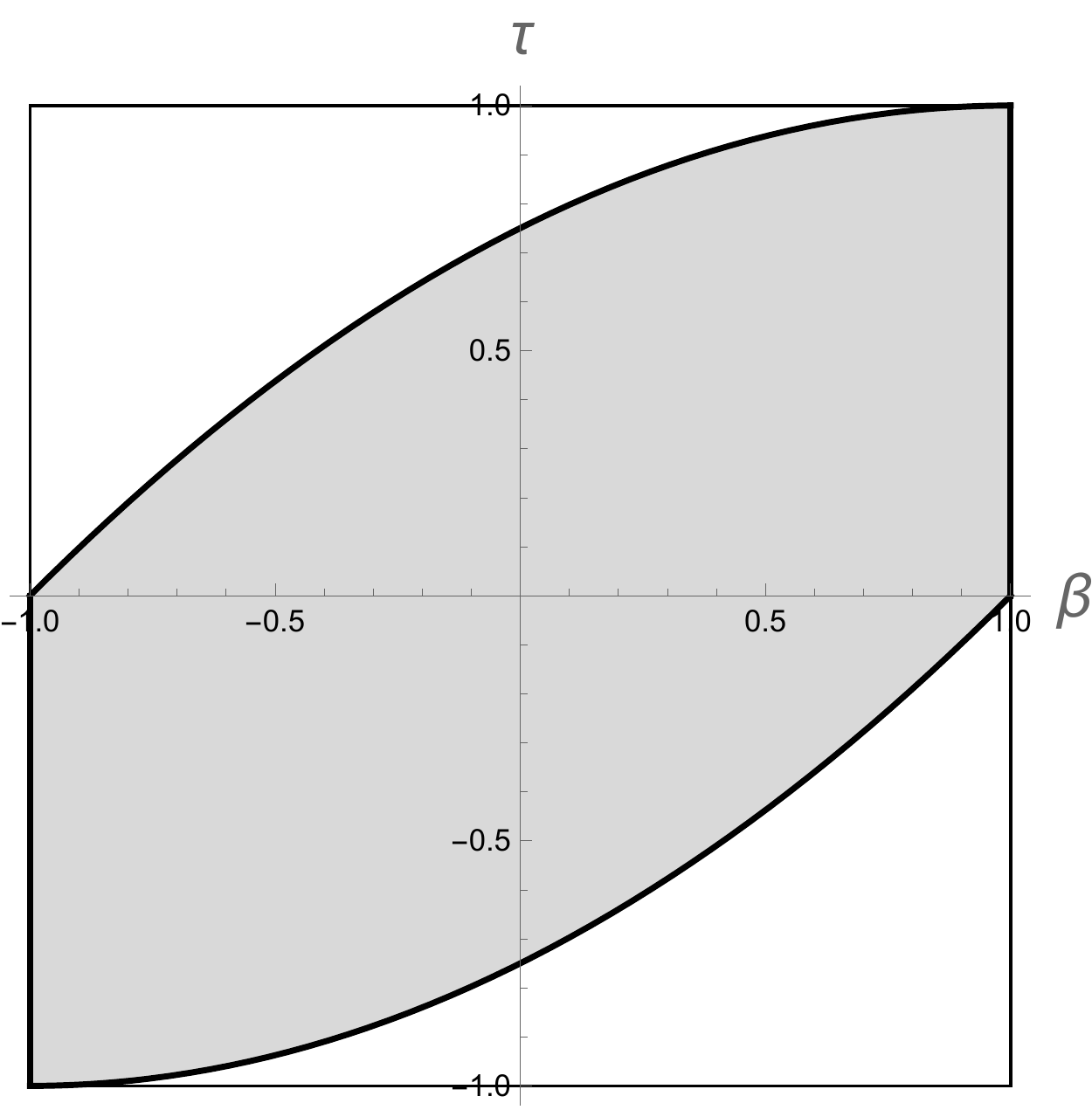}
    \vspace{2mm}
    
    \includegraphics[width=5cm]{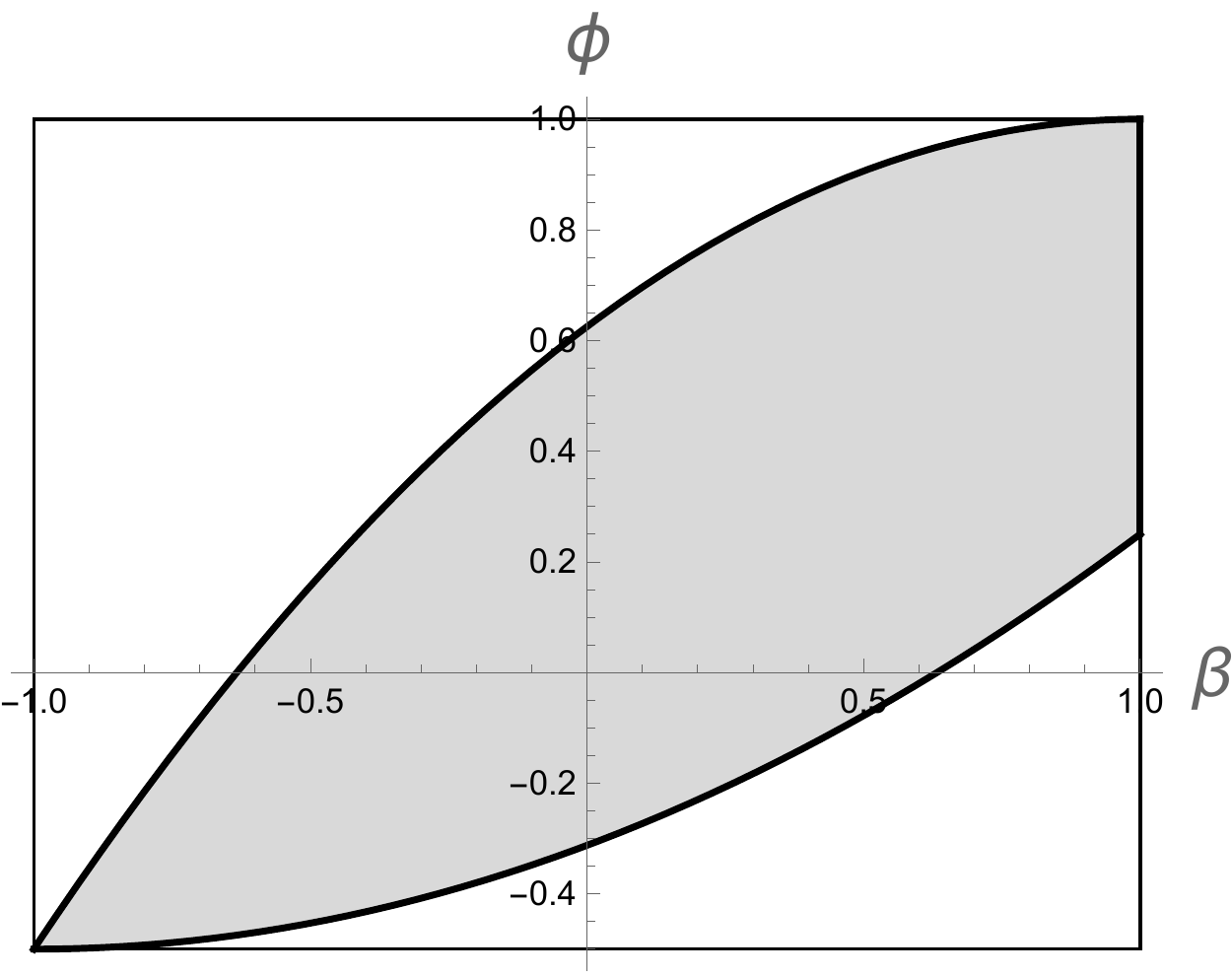} \hspace{4mm}
    \includegraphics[width=5cm]{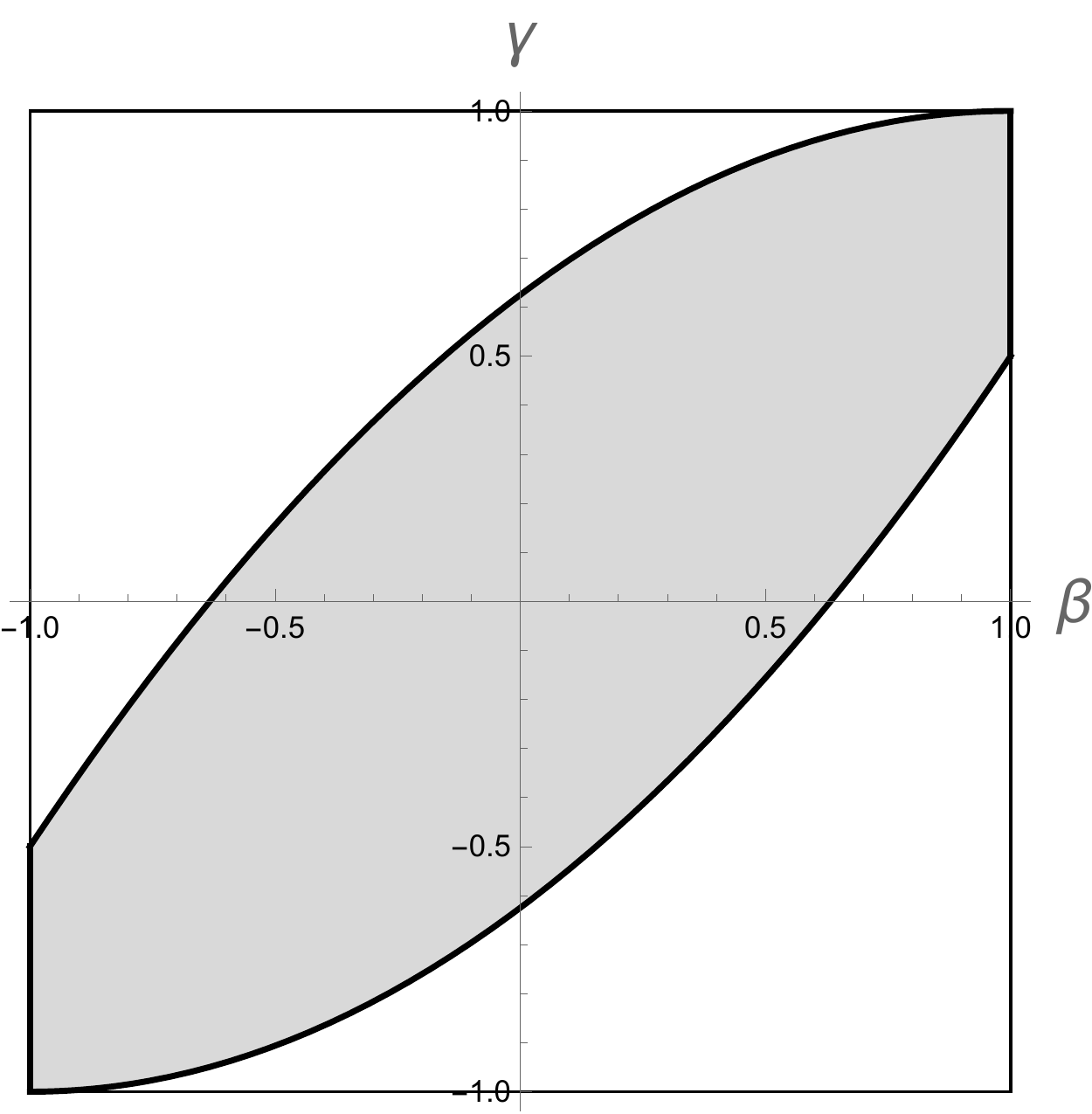}
    \caption{The exact regions determined by Blomqvist’s beta and Spearman’s rho, Kendall’s tau, Spearman’s footrule, and Gini’s gamma, respectively.}
    \label{fig:beta}
\end{figure}

\section{Doubly symmetric shuffles}\label{sec:symmetric}

It is well known that any copula can be approximated arbitrarily well in the sup norm by a shuffle of $M$.
In this section we investigate approximations of doubly symmetric copulas with doubly symmetric shuffles of $M$, defined below.

\begin{definition}
A copula $C$ is called \emph{doubly symmetric} if $C=C^t=\widehat{C}$.
\end{definition}

Note that a copula is doubly symmetric if and only if the distribution of its mass in the unit square is symmetric with respect to both the main and the opposite diagonal.
The reflection with respect to the main diagonal is given by $(u,v) \mapsto (v,u)$ while the reflection with respect to the opposite diagonal is given by $(u,v) \mapsto (1-v,1-u)$.

A shuffle of $M$
$$ C = M(n, J, \pi, \omega)$$
is determined by a positive integer $n$, a partition
$J=\{J_1,J_2,\ldots, J_n\}$ of the interval $\II$ into $n$ pieces, where $J_i=[u_{i-1},u_i]$ and $0 =u_0 \le u_1 \le u_2 \le \ldots \le u_{n-1} \le u_n=1$, shortly written as $(n-1)$-tuple of splitting points $J=(u_1, u_2, \ldots, u_{n-1})$,
a permutation $\pi \in S_n$, written as $n$-tuple of images $\pi = (\pi(1), \pi(2), \ldots, \pi(n))$, and a mapping $\omega: \{1, 2, \ldots, n\} \to \{-1, 1\}$, written as $n$-tuple of images $\omega = (\omega(1), \omega(2), \ldots, \omega(n))$. The mass of $C$ is concentrated on squares $J_i \times  [v_{\pi(i)-1} \times v_{\pi(i)}]$ where $0= v_0 \le v_1 \le v_2\le \ldots \le v_{n-1} \le v_n = 1$. For more details see \cite[\S3.2.3]{Nels}. Notice that we allow some of the intervals in the partition $J$ to be singletons. We can now define doubly symmetric shuffles of $M$.

\begin{definition}
We will say that a shuffle $C=M(n,J,\pi,\omega)$ of $M$ with $J=(u_1,u_2,\ldots,u_{n-1})$, $u_0=0$, $u_n=1$, is a \emph{doubly symmetric shuffle} if the following properties hold
\begin{enumerate}[(i)]
    \item $n$ is even,
    \item $\pi^2=\id$ and $\pi(n-i+1)=n-\pi(i)+1$ for all $i=1,2,\ldots,n$,
    \item $\omega(i)=\omega(\pi(i))=\omega(n-i+1)$ for all $i=1,2,\ldots,n$,
    \item $u_i-u_{i-1}=u_{\pi(i)}-u_{\pi(i)-1}=u_{n-i+1}-u_{n-i}$ for all $i=1,2,\ldots,n$.
\end{enumerate}
\end{definition}

In the following lemma we give some properties of doubly symmetric shuffles.

\begin{lemma}\label{lem:shuffle_prop}
If $C$ is a doubly symmetric shuffle of $M$ then
\begin{enumerate}[(a)]
\item $C$ is a doubly symmetric copula,
\item $u_{n-i}=1-u_i$ for all $i=1,2,\ldots,n$, in particular, $u_{\frac{n}{2}}=\frac{1}{2}$,
\item all the mass of $C$ is concentrated on the squares $J_i \times J_{\pi(i)}=[u_{i-1},u_i] \times [u_{\pi(i)-1},u_{\pi(i)}]$, $i=1,2,\ldots,n$, and \begin{align*}
V_C(J_i \times J_{\pi(i)}) &=V_C(J_{\pi(i)} \times J_{i})=V_C(J_{n-i+1} \times J_{\pi(n-i+1)})\\
&=V_C(J_{\pi(n-i+1)} \times J_{n-i+1})=u_i-u_{i-1}.
\end{align*}
\end{enumerate}
\end{lemma}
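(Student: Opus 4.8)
The plan is to prove the three parts in the order (b), (c), (a), since (b) is needed for (c) and both are needed for (a). Throughout I abbreviate the interval widths by $w_i = u_i - u_{i-1}$. For (b), condition (iv) immediately gives the width symmetry $w_i = w_{n-i+1}$, and then I would compute
\[
u_{n-i} = \sum_{k=1}^{n-i} w_k = \sum_{k=1}^{n-i} w_{n-k+1} = \sum_{j=i+1}^{n} w_j = u_n - u_i = 1 - u_i ,
\]
where the middle equality reindexes by $j = n-k+1$ and uses $w_k = w_{n-k+1}$. Taking $i = \frac{n}{2}$ (legitimate since $n$ is even) yields $u_{n/2} = 1 - u_{n/2}$, hence $u_{n/2} = \frac12$.

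For (c), the crucial preliminary observation is that the partition on the $v$-axis coincides with the one on the $u$-axis. Since the mass sits on \emph{squares} $J_i \times [v_{\pi(i)-1},v_{\pi(i)}]$, the two side lengths must agree, i.e. $v_{\pi(i)} - v_{\pi(i)-1} = w_i$; combining this with (iv) gives $v_{\pi(i)} - v_{\pi(i)-1} = w_{\pi(i)}$, and since $\pi$ is a bijection this says $v_j - v_{j-1} = w_j$ for every $j$. As $v_0 = u_0 = 0$, summation gives $v_j = u_j$, so the mass squares are exactly $J_i \times J_{\pi(i)}$. Each vertical strip $J_i \times \II$ carries $u$-marginal mass $w_i$, all of which lies in $J_i \times J_{\pi(i)}$, so $V_C(J_i \times J_{\pi(i)}) = w_i = u_i - u_{i-1}$. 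The other three volumes I would identify with mass squares by re-indexing: $J_{\pi(i)} \times J_i$ is the square of index $\pi(i)$ (using $\pi^2 = \id$), $J_{n-i+1} \times J_{\pi(n-i+1)}$ is the square of index $n-i+1$, and $J_{\pi(n-i+1)} \times J_{n-i+1}$ is the square of index $\pi(n-i+1)$ (again $\pi^2 = \id$ together with (ii)); in each case the mass equals the corresponding width, which equals $w_i$ by (iv) and the width symmetry established in (b).

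For (a), I would show that the mass distribution of $C$ (which is a copula by the shuffle construction) is invariant under the two reflections $(u,v) \mapsto (v,u)$ and $(u,v)\mapsto(1-v,1-u)$, yielding $C = C^t$ and $C = \widehat{C}$ respectively. Both maps are isometries preserving the two diagonal directions, so they carry a diagonal mass segment to a diagonal mass segment of the same orientation; it therefore suffices to check that each reflection permutes the mass squares while preserving the associated value of $\omega$. For the transpose, $(u,v)\mapsto(v,u)$ sends $J_i \times J_{\pi(i)}$ to $J_{\pi(i)} \times J_i$, which is the mass square of index $\pi(i)$, and (iii) gives $\omega(\pi(i)) = \omega(i)$. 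For the survival copula, part (b) gives $1 - u_j = u_{n-j}$, so $t \mapsto 1-t$ maps $J_j$ to $J_{n-j+1}$; hence $(u,v)\mapsto(1-v,1-u)$ sends $J_i \times J_{\pi(i)}$ to $J_{n-\pi(i)+1} \times J_{n-i+1}$, which by (ii) and $\pi^2 = \id$ is the mass square of index $n-\pi(i)+1$, and (iii) gives $\omega(n-\pi(i)+1) = \omega(\pi(i)) = \omega(i)$. In both cases the configuration returns to itself, so $C = C^t = \widehat{C}$.

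I expect the main obstacle to be the bookkeeping in part (a): one must simultaneously track how each reflection acts on the \emph{index} of a mass square and on its \emph{orientation}, and verify that conditions (ii)--(iv) together with the identity $u_{n-i} = 1-u_i$ conspire to return a genuine mass square carrying the correct $\omega$-value. The underlying geometric fact—that reflection across either diagonal of $\II^2$ preserves the increasing/decreasing orientation of the diagonal mass segments—is intuitively clear but deserves a careful statement, since it is precisely what converts the combinatorial symmetry conditions on $\pi$ and $\omega$ into the analytic identities $C = C^t$ and $C = \widehat{C}$.
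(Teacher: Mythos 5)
Your proposal is correct and follows essentially the same route as the paper's proof: establish $u_{n-i}=1-u_i$ from condition (iv) by the same reindexing sum, show the $v$-partition coincides with the $u$-partition so the mass squares are $J_i\times J_{\pi(i)}$, and deduce double symmetry by checking that both diagonal reflections permute the mass squares while preserving $\omega$ via conditions (ii) and (iii). The only (cosmetic) difference is that in part (c) you use bijectivity of $\pi$ together with (iv) to get $v_j-v_{j-1}=w_j$ directly, where the paper runs an induction and invokes $\pi^{-1}=\pi$.
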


\begin{proof}
Let us first prove (b). Note that
$$u_{n-i}=\sum_{k=1}^{n-i} (u_{k}-u_{k-1})=\sum_{k=1}^{n-i} (u_{n-k+1}-u_{n-k})$$
by (iv). By introducing a new index $j=n-k+1$ we get
$$u_{n-i}=\sum_{j=i+1}^{n} (u_{j}-u_{j-1})=u_n-u_i=1-u_i.$$
The equality $u_{\frac{n}{2}}=\frac12$ follows easily by taking $i=\frac{n}{2}$.

To prove (c) let $v_0=0$ and $v_n=1$ and let $v_1,v_2,\ldots,v_{n-1}$ be splitting points on the $v$-axis such that the mass of $C$ is concentrated on squares
$J_i \times J_{\pi(i)}^{\pi^{-1}}=[u_{i-1},u_i] \times [v_{\pi(i)-1} \times v_{\pi(i)}]$. This means that $v_{\pi(i)}-v_{\pi(i)-1}=u_i-u_{i-1}$ for all $i$, so that
$v_j=v_{j-1}+u_{\pi^{-1}(j)}-u_{\pi^{-1}(j)-1}$. By induction it follows that $$v_j=\sum_{k=1}^j (u_{\pi^{-1}(k)}-u_{\pi^{-1}(k)-1}).$$
Since $C$ is doubly symmetric shuffle of $M$ we have $\pi^{-1}=\pi$, hence
$$v_j=\sum_{k=1}^j (u_{\pi(k)}-u_{\pi(k)-1})=\sum_{k=1}^j (u_{k}-u_{k-1})=u_j,$$
for all $j=1,2,\ldots,n-1$, where the second equation follows from item (iv) of the definition of a doubly symmetric shuffle of $M$. Clearly also $v_0=u_0$ and $v_n=u_n$.
This implies that $J_{\pi(i)}^{\pi^{-1}}=[v_{\pi(i)-1} \times v_{\pi(i)}]=[u_{\pi(i)-1} \times u_{\pi(i)}]=J_{\pi(i)}$ for all $i=1,2,\ldots,n$ and $V_C(J_i \times J_{\pi(i)})=u_i-u_{i-1}$.
Now $V_C(J_{\pi(i)} \times J_{i})=u_{\pi(i)}-u_{\pi(i)-1}=u_i-u_{i-1}$ follows from the above and item (iv). Similarly, we prove for the other two volumes which proves (c).
Thus, to prove (a) it suffices to show that 
the set of squares $J_{i} \times J_{\pi(i)}$ with $\omega(i)=1$ is invariant under reflection with respect to both diagonals and the same holds for the set of squares $J_{i} \times J_{\pi(i)}$ with $\omega(i)=-1$.
The reflection with respect to the main diagonal reflects the square $J_{i} \times J_{\pi(i)}=[u_{i-1},u_{i}] \times [u_{\pi(i)-1},u_{\pi(i)}]$ onto $ [u_{\pi(i)-1},u_{\pi(i)}] \times [u_{i-1},u_i]=J_{\pi(i)} \times J_{i}=J_{i'} \times J_{\pi(i')}$, where $i'=\pi(i)$ by (ii).
Since $\omega(i)=\omega(\pi(i))$ by (iii), the original and the reflected square have the same value of $\omega$.
The reflection with respect to the opposite diagonal reflects the square $J_{i} \times J_{\pi(i)}=[u_{i-1},u_i] \times [u_{\pi(i)-1},u_{\pi(i)}]$ onto $ [1-u_{\pi(i)},1-u_{\pi(i)-1}] \times [1-u_i,1-u_{i-1}]=[u_{n-\pi(i)},u_{n-\pi(i)+1}]\times[u_{n-i},u_{n-i+1}]=J_{n-\pi(i)+1} \times J_{n-i+1}$ by (b), and $J_{n-\pi(i)+1} \times J_{n-i+1}=J_{\pi(n-i+1)} \times J_{n-i+1}=J_{i''} \times J_{\pi(i'')}$ by (ii), where $i''=\pi(n-i+1)$.
Since $\omega(i)=\omega(n-i+1)=\omega(\pi(n-i+1))=\omega(i'')$ by (iii), the original and the reflected square have the same value of $\omega$. This finishes the proof of (a).
\end{proof}

Note that a doubly symmetric copula $C$ which is also a shuffle $M(n,J,\pi,\omega)$ is not necessarily a doubly symmetric shuffle. But there exists a doubly symmetric shuffle $M(n',J',\pi',\omega')$ such that $C=M(n',J',\pi',\omega')$. For example, if $\frac12$ is not one of the splitting points, we can add it and adjust $\pi$ and $\omega$ accordingly.

In next lemma we prove that doubly symmetric copulas can be approximated by doubly symmetric shuffles of $M$.
In fact, we show that the original construction by Mikusi\'{n}ski et al. \cite{Miku} (c.f. also \cite[\S 3.2.3]{Nels}) of a shuffle of $M$ that approximates a copula $C$ produces a doubly symmetric shuffle of $M$ whenever $C$ is a doubly symmetric copula.
Recall that a shuffle of $M$ is called \emph{straight} if the mapping $\omega$ has all values equal to $1$.

\begin{lemma}\label{lem:approx}
For any doubly symmetric copula $C$ and any $\varepsilon>0$ there exists a straight doubly symmetric shuffle of $M$, which we denote by $C'$, such that 
$$\sup_{(u,v) \in \II^2}|C(u,v)-C'(u,v)| <\varepsilon.$$
\end{lemma}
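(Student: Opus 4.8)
The plan is to invoke the classical approximation of a copula by a straight shuffle of $M$ due to Mikusi\'nski et al., and then to verify directly that, when the input copula is doubly symmetric, the combinatorial data of the output shuffle automatically satisfy conditions (i)--(iv) in the definition of a doubly symmetric shuffle.

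First I would fix an even integer $N$ (to be taken large later) and partition $\II$ into the $N$ equal subintervals $[\frac{i-1}{N},\frac{i}{N}]$, $i=1,\dots,N$, thereby dividing $\II^2$ into $N^2$ cells. Let $m_{ij}=V_C\big([\frac{i-1}{N},\frac{i}{N}]\times[\frac{j-1}{N},\frac{j}{N}]\big)$ be the $C$-mass of the cell in column $i$ and row $j$; since $C$ has uniform marginals, all row sums and column sums of the matrix $(m_{ij})$ equal $\frac1N$. The construction then subdivides each column $i$ into $N$ consecutive subintervals of widths $m_{i1},m_{i2},\dots,m_{iN}$ (singletons allowed when $m_{ij}=0$) and each row $j$ into $N$ consecutive subintervals of widths $m_{1j},\dots,m_{Nj}$, placing on each resulting small subsquare the increasing diagonal carrying mass $m_{ij}$. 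This yields a straight shuffle of $M$, call it $C'$, with $n=N^2$ pieces, whose cell masses are exactly the $m_{ij}$; hence $C'$ agrees with $C$ at every grid point $(\frac iN,\frac jN)$, and the $1$-Lipschitz property of copulas then gives $\sup|C-C'|\le\frac2N$, which is $<\varepsilon$ once $N$ is large. This part is standard and I would simply cite it.

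The substance of the argument is bookkeeping with a single index bijection. I would label the piece sitting in column $i$ and mapped into row $j$ by the pair $(i,j)$, and order the $u$-pieces lexicographically so that $(i,j)$ carries the linear index $a=(i-1)N+j$; the corresponding $v$-piece (in row $j$, coming from column $i$) then carries the $v$-index $(j-1)N+i$. Under this labelling the shuffle permutation is the coordinate transposition $\pi\colon(i,j)\mapsto(j,i)$, which is manifestly an involution, so $\pi^2=\id$; and a short computation shows that $n-a+1$ is the index of the pair $(N-i+1,N-j+1)$, whence $\pi(n-i+1)=n-\pi(i)+1$ follows formally. These give (i) (with $N$ even, $n=N^2$ is even) and (ii), and (iii) is automatic since $\omega\equiv1$ for a straight shuffle. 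Note that (ii) and (iii) hold for any copula and do not yet use symmetry.

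Only condition (iv) uses the hypothesis, and this is where double symmetry enters exactly. The width of piece $(i,j)$ is $m_{ij}$, the width of $\pi(i,j)=(j,i)$ is $m_{ji}$, and the width of the reversed piece $(N-i+1,N-j+1)$ is $m_{N-i+1,N-j+1}$. The symmetry $C=C^t$ gives $m_{ij}=m_{ji}$, while the survival symmetry $C=\widehat C$ (invariance under $(u,v)\mapsto(1-u,1-v)$) gives $m_{ij}=m_{N-i+1,N-j+1}$; together these are precisely the three equal widths demanded by (iv). Having checked (i)--(iv), I would conclude from Lemma~\ref{lem:shuffle_prop}(a) that $C'$ is a doubly symmetric copula, completing the proof. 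The only point requiring genuine care — and the one I would verify most carefully — is that the chosen orderings of subintervals within columns and within rows are compatible, so that the central reversal $a\mapsto n-a+1$ of the linear order really does correspond to the geometric reflection $(i,j)\mapsto(N-i+1,N-j+1)$; once the indexing is fixed consistently, conditions (ii) and (iv) fall out exactly as stated.
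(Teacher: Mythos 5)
Your proposal is correct and follows essentially the same route as the paper's own proof: both apply the Mikusi\'nski construction on an even $N\times N$ (resp.\ $m\times m$) grid of equal cells, take $n=N^2$ pieces with the transposition-type permutation $\pi$ and $\omega\equiv 1$, and verify conditions (i)--(iv) by translating $C=C^t$ into $m_{ij}=m_{ji}$ and $C=\widehat{C}$ into $m_{ij}=m_{N-i+1,N-j+1}$, with the uniform error bound coming from the classical approximation result. The only cosmetic differences are your column-major indexing $a=(i-1)N+j$ versus the paper's $i=m(j-1)+k$ (which agree for symmetric $C$) and your explicit grid-point-plus-Lipschitz bound $2/N$ where the paper cites Mikusi\'nski et al.\ directly.
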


\begin{proof}
Suppose $C$ is a doubly symmetric copula.
Let $m$ be an even positive integer and $n=m^2$, so that $n$ is even as well.
Let $J=\{J_1,J_2,\ldots,J_n\}$ be a partition of $\II$ such that the intervals $J_1,J_2,\ldots,J_n$ are ordered from left to right and the length of $J_i$ is equal to $w_i=V_C([\tfrac{k-1}{m},\tfrac{k}{m}]\times[\tfrac{j-1}{m},\tfrac{j}{m}])$, where $i=m(j-1)+k$, $j=1,2,\ldots,m$, $k=1,2,\ldots,m$,
and let $\pi \in S_n$ be a permutation given by $\pi(m(j-1)+k)=m(k-1)+j$ for all $j,k \in \{1,2,\ldots,m\}$.
Finally, let $\omega$ be constantly equal to $1$.
We claim that $C_m=M(n,J,\pi,\omega)$ is a doubly symmetric shuffle.
For the rest of the proof we let $i=m(j-1)+k$, where $j,k \in \{1,2,\ldots,m\}$. Clearly, $\pi^2=\id$. Furthermore,
\begin{align*}
\pi(n-i+1)&=\pi(m^2-m(j-1)-k+1)=\pi(m(m-j)+(m-k+1))\\
&=m(m-k)+(m-j+1)=m^2-m(k-1)-j+1=n-\pi(i)+1
\end{align*}
for all $i=1,2,\ldots,n$.
Since copula $C$ is doubly symmetric, its mass is symmetric with respect to both diagonals.
The symmetry with respect to the main diagonal implies
\begin{equation}\label{eq:vi_1}
w_i=V_C([\tfrac{k-1}{m},\tfrac{k}{m}]\times[\tfrac{j-1}{m},\tfrac{j}{m}])=V_C([\tfrac{j-1}{m},\tfrac{j}{m}]\times[\tfrac{k-1}{m},\tfrac{k}{m}])=w_{m(k-1)+j}=w_{\pi(i)}
\end{equation}
for all $i \in \{1,2,\ldots,n\}$, while the symmetry with respect to the opposite diagonal implies
\begin{equation}\label{eq:vi_2}
\begin{aligned}
w_i &=V_C([\tfrac{k-1}{m},\tfrac{k}{m}]\times[\tfrac{j-1}{m},\tfrac{j}{m}])=V_C([1-\tfrac{j}{m},1-\tfrac{j-1}{m}]\times[1-\tfrac{k}{m},1-\tfrac{k-1}{m}]) \\
&=V_C([\tfrac{m-j}{m},\tfrac{m-j+1}{m}]\times[\tfrac{m-k}{m},\tfrac{m-k+1}{m}])=w_{m(m-k)+m-j+1}=w_{m^2-m(k-1)-j+1}=w_{n-\pi(i)+1}
\end{aligned}
\end{equation}
for all $i \in \{1,2,\ldots,n\}$.
By definition of partition $J$, its splitting points are
$u_0=0$ and $u_i=\sum_{k=1}^{i}w_k$ for all $i=1,2,\ldots,n$, where $u_n=\sum_{k=1}^{n}w_k=V_C(\II \times \II)=1$.
Together with equalities \eqref{eq:vi_1} and \eqref{eq:vi_2} this implies
$$u_i-u_{i-1}=w_i=w_{\pi(i)}=u_{\pi(i)}-u_{\pi(i)-1}$$
and
$$u_i-u_{i-1}=w_i=w_{\pi(i)}=w_{n-\pi^2(i)+1}=w_{n-i+1}=u_{n-i+1}-u_{n-i}.$$
We have thus shown that $C_m$ is a straight doubly symmetric shuffle of $M$.
To finish the proof we note that as $m$ tends to $\infty$, the copula $C_m$ converges uniformly to $C$ as proved in \cite[Theorem~3.1]{Miku} (cf. also \cite[\S 3.2.3]{Nels}).
\end{proof}

\section{Lower bound - special case}\label{sec:special}

In the following two sections we prove a lower bound for the value of Spearman's rho for any copula with a given value of Spearman's footrule.
In this section we consider copulas which have all the mass concentrated on the main and opposite diagonal. In the next section we will reduce the general case to this special case.

Copulas which have all mass concentrated on the two diagonals correspond to uniformly distributed random variables $U$ and $V$ on interval $\II$ with the property $P(U=V) + P(U=1-V) =1$.
As we will see, any such copula is completely determined by its diagonal $\delta_C(u) = C(u,u)$ and it is doubly symmetric, so it is easier to tackle. 

\begin{proposition}\label{prop:x}
  Let $U$ and $V$ be uniformly distributed random variables on interval $\II$ with the property $P(U=V) + P(U=1-V) =1$ and let $C\in\CC$ be their copula. Let $\delta_C(u) = C(u,u)$ be the diagonal of $C$ and define 
  $$\alpha_C(u)= \int_0^u \delta_C(t)dt .$$ Let $\phi(C)=p$ for some $p\in[-\frac12,1]$ and $u_0=\frac12(1-\frac{1}{\sqrt{3}}\sqrt{2p+1}) \in [0,\frac12]$. Then the following holds: 
  \begin{enumerate}
      \item[(a)] $$C(u,v) = \begin{cases}
            \delta_C(u); & 0 \le u \le \frac12, u \le v \le 1-u, \\
            \delta_C(v); & 0 \le v \le \frac12, v \le u \le 1-v, \\
            \delta_C(u)+v-u; & \frac12 \le u \le 1, 1-u \le v \le u, \\
            \delta_C(v)+u-v; & \frac12 \le v \le 1, 1-v \le u \le v, 
            \end{cases}$$
            and in particular $C=C^t$.
      \item[(b)] $$\displaystyle \int_0^1 \int_0^1 C(u,v) dudv = 4\int_0^{\frac12} \alpha_C(u)du - 4\int_{\frac12}^1 \alpha_C(u)du + 2\alpha_C(1)-\frac16.$$
      \item[(c)] The function $\delta_C$ is increasing and 1-Lipshitz on the interval $[0, \frac12]$.
      \item[(d)] For every $u \in [0, \frac12]$ we have $$\delta_C(1-u) = 1-2u+\delta_C(u),$$
      and in particular, $C=\widehat{C}$.
      \item[(e)] For every $u \in [0, \frac12]$ we have $$\alpha_C(1-u) = 2 \alpha_C({\textstyle\frac12}) - \alpha_C(u)+ \frac{(1-2u)^2}{4}.$$
      \item[(f)] $\alpha_C(1) = \frac{p+2}{6}$ and $\alpha_C(\frac12) = \frac{2p+1}{24}$.
      \item[(g)] $$\displaystyle \int_0^1 \int_0^1 C(u,v) dudv = 8\int_0^{\frac12} \alpha_C(u)du + \frac16.$$
      \item[(h)] For every $u \in [0, \frac12]$ we have $$\alpha_C(u) \ge \alpha_0(u) := \begin{cases}
            0; & 0 \le u \le u_0, \\
            \frac12(u-u_0)^2; & u_0 < u \le \frac12.
            \end{cases}$$
      \item[(i)] The following lower bound for $\rho(C)$ holds $$\rho(C) \ge \frac29\sqrt{3}(1+2p)^{3/2} -1.$$
  \end{enumerate}
\end{proposition}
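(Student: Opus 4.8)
The plan is to express $\rho(C)$ entirely in terms of the single quantity $\int_0^{1/2}\alpha_C(u)\,du$ and then insert the pointwise bound from part~(h). First I would combine the closed form for $\int_0^1\int_0^1 C(u,v)\,du\,dv$ obtained in part~(g) with the definition \eqref{rho} of Spearman's rho. Since $\rho(C)=12\int_0^1\int_0^1 C(u,v)\,du\,dv-3$ and part~(g) gives $\int_0^1\int_0^1 C(u,v)\,du\,dv = 8\int_0^{1/2}\alpha_C(u)\,du+\tfrac16$, this yields the identity
$$\rho(C)=96\int_0^{1/2}\alpha_C(u)\,du-1,$$
which reduces the whole statement to bounding $\int_0^{1/2}\alpha_C(u)\,du$ from below.

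Next I would apply the inequality $\alpha_C(u)\ge\alpha_0(u)$ from part~(h), valid for every $u\in[0,\tfrac12]$, and integrate it over $[0,\tfrac12]$. Because $\alpha_0$ vanishes on $[0,u_0]$ and equals $\tfrac12(u-u_0)^2$ on $(u_0,\tfrac12]$, the resulting integral is elementary:
$$\int_0^{1/2}\alpha_C(u)\,du\ge\int_{u_0}^{1/2}\tfrac12(u-u_0)^2\,du=\frac{1}{6}\Big(\tfrac12-u_0\Big)^3.$$

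Finally I would substitute the value of $u_0$ and simplify. From $u_0=\tfrac12\big(1-\tfrac1{\sqrt3}\sqrt{2p+1}\big)$ we get $\tfrac12-u_0=\tfrac{1}{2\sqrt3}\sqrt{2p+1}$, so $\big(\tfrac12-u_0\big)^3=\tfrac{1}{24\sqrt3}(1+2p)^{3/2}$. Plugging this into the two displayed relations gives
$$\rho(C)\ge 96\cdot\frac{1}{6}\cdot\frac{1}{24\sqrt3}(1+2p)^{3/2}-1=\frac{2}{3\sqrt3}(1+2p)^{3/2}-1=\frac{2}{9}\sqrt3\,(1+2p)^{3/2}-1,$$
which is exactly the claimed bound, after rationalizing $\tfrac{2}{3\sqrt3}=\tfrac{2}{9}\sqrt3$. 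In truth there is no serious obstacle here: parts~(g) and~(h) carry all the analytic content, and statement~(i) follows as a direct corollary. The only points requiring care are the bookkeeping of constants and the verification that the nonnegative integrand $\alpha_C-\alpha_0$ is genuinely integrable, so that the integrated inequality is legitimate; this is routine given the $1$-Lipschitz regularity of $\delta_C$ established in part~(c).
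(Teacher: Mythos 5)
Your derivation of part (i) is correct and is, in fact, essentially word-for-word the paper's own argument: $\rho(C)=96\int_0^{1/2}\alpha_C(u)\,du-1$ by (g) and the definition of $\rho$, then the pointwise bound (h) integrated over $\bigl[u_0,\frac12\bigr]$ gives $\frac16\bigl(\frac12-u_0\bigr)^3$, and substituting $u_0$ yields $\frac29\sqrt{3}(1+2p)^{3/2}-1$. (Your worry about integrability is vacuous: $\alpha_C$ is continuous, being the integral of the bounded function $\delta_C$.) The problem is that the statement you were asked to prove is the whole of Proposition~\ref{prop:x}, parts (a)--(i), and your proposal proves only (i) while assuming (g) and (h) outright --- as you yourself note, ``parts (g) and (h) carry all the analytic content.'' Those parts are not given; they are the claim.

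What is missing is therefore essentially the entire proof. Concretely: (a) requires deducing the closed form of $C$ off the diagonal from the mass condition $P(U=V)+P(U=1-V)=1$; (c) and (d) require the $1$-Lipschitz property of $\delta_C$ on $\bigl[0,\frac12\bigr]$ and the symmetry $\delta_C(1-u)=1-2u+\delta_C(u)$; (e) and (f) turn these into the identities $\alpha_C(1-u)=2\alpha_C(\frac12)-\alpha_C(u)+\frac{(1-2u)^2}{4}$, $\alpha_C(1)=\frac{p+2}{6}$ and $\alpha_C(\frac12)=\frac{2p+1}{24}$; (b) and (g) then reduce the double integral of $C$ to $8\int_0^{1/2}\alpha_C(u)\,du+\frac16$. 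The crux is (h), and it is not a routine verification: the paper proves it by contradiction, showing that if $\alpha_C(u_1)<\frac12(u_1-u_0)^2$ for some $u_1\in\bigl(u_0,\frac12\bigr)$, then the $1$-Lipschitz bound from (c) forces $\delta_C(u_1)<u_1-u_0$, hence $\delta_C(t)<t-u_0$ on $\bigl[u_1,\frac12\bigr]$, and integrating gives $\alpha_C(\frac12)<\frac12\bigl(\frac12-u_0\bigr)^2=\frac{2p+1}{24}$, contradicting (f). Without an argument of this kind, your proposal establishes nothing beyond the elementary bookkeeping that converts (g) and (h) into (i).
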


\begin{proof}
From the condition $P(U=V) + P(U=1-V) =1$ it follows
\begin{align*}
  &P \left( U \leq \frac12, U < V < 1-U \right) = 0,  
  &P \left( U \geq \frac12, 1-U < V < U \right) = 0, \\
  &P \left( Y \leq \frac12, V < U < 1-V \right) = 0, 
  &P \left( Y \geq \frac12, 1-V < U < V \right) = 0.
\end{align*}
If $u \in [0, \frac12]$ we have for any $v \in  [u, 1-u]$ that $C(u,v) = C(u,u) = \delta_C(u)$. If $u \in [\frac12, 1]$ we have for any $v \in [1-u, u]$ that $C(u,v) = C(u,u) + v-u= \delta_C(u)+v-u$. Similar equalities hold if we interchange the roles of $u$ and $v$, so the copula $C$ is symmetric, which proves (a). 

To prove (b) we compute
\begin{align*}
\int_0^1 \int_0^1 C(u,v) dudv &= 2\int_0^{\frac12} \left(\int_u^{1-u} C(u,v) dv\right)du + 2 \int_{\frac12}^1 \left(\int_{1-u}^u C(u,v) dv\right)du \\
&= 2\int_0^{\frac12} \left(\int_u^{1-u} \delta_C(u) dv\right)du + 2 \int_{\frac12}^1 \left(\int_{1-u}^u (\delta_C(u)+v-u) dv\right)du \\
&= 2\int_0^{\frac12} \delta_C(u)(1-2u) du + 2 \int_{\frac12}^1  \delta_C(u)(2u-1) du  - \frac16 \\
&= 2\alpha_C({\textstyle\frac12}) - 4\int_0^{\frac12} u\delta_C(u)du -  2\alpha_C(1) + 2\alpha_C({\textstyle\frac12}) + 4 \int_{\frac12}^1  u\delta_C(u) du  - \frac16.
\end{align*}
Using integration by parts we obtain 
$$ \int_0^{\frac12} u\delta_C(u)du = {\textstyle \frac12\alpha_C(\frac12)} - \int_0^{\frac12} \alpha_C(u)du $$
and 
$$ \int_{\frac12}^1 u\delta_C(u)du = \alpha_C(1) - {\textstyle \frac12\alpha_C(\frac12)} - \int_{\frac12}^1 \alpha_C(u)du .$$
Plugging into the previous equation we obtain (b).

The diagonal of a copula is obviously increasing, so to prove (c) suppose $0 \le u \le v \le \frac12$. Since $C(u,v) = \delta_C(u)$ in this case, we have $\delta_C(v) - \delta_C(u) = C(v,v) - C(u,v) \le v-u$. If $u \in [0, \frac12]$ we obtain $\delta_C(1-u) = 1-2u+\delta_C(u)$ from $C(1-u, 1-u) = C(1-u, u) + 1-2u = C(u,u) +1-2u$. If $u \in [0, \frac12]$ and $v \in [u, 1-u]$, we have 
\begin{align*}
    \widehat{C}(u,v) &= u+v-1+C(1-u,1-v) = u+v-1+\delta_C(1-u)+(1-v)-(1-u)\\
    &= \delta_C(1-u) - 1 + 2u = \delta_C(u) = C(u,v).
\end{align*}
Similarly other cases are treated to prove (d). 

To prove (e) assume $u \in [0, \frac12]$ and compute
\begin{align*}
\alpha_C(1-u) &=  \int_0^{1-u} \delta_C(t) dt = \alpha_C({\textstyle\frac12}) + \int_{\frac12}^{1-u} \delta_C(t) dt \
= \alpha_C({\textstyle\frac12}) + \int_{\frac12}^{1-u} (2t-1+\delta_C(1-t)) dt \\
&= \alpha_C({\textstyle\frac12}) + \frac{(1-2u)^2}{4} + \int_{\frac12}^{1-u} \delta_C(1-t) dt 
= \alpha_C({\textstyle\frac12}) + \frac{(1-2u)^2}{4} - \int_{\frac12}^u \delta_C(t) dt \\
&= 2\alpha_C({\textstyle\frac12}) + \frac{(1-2u)^2}{4} - \int_0^u \delta_C(t) dt
= 2\alpha_C({\textstyle\frac12}) + \frac{(1-2u)^2}{4} - \alpha_C(u).
\end{align*}

The property $\alpha_C(1) = \frac{p+2}{6}$ is immediate from the definitions of $\phi(C)$ and $\alpha_C$, and we get $\alpha_C(\frac12) = \frac{2p+1}{24}$ by plugging $u=0$ into (e).

To prove (g) we apply (e) in (b):
\begin{align*}
\int_0^1 \int_0^1 C(u,v) dudv &= 4\int_0^{\frac12} \alpha_C(u)du - 4\int_{\frac12}^1 \alpha_C(u)du + 2\alpha_C(1)-\frac16\\
& = 4\int_0^{\frac12} \alpha_C(u)du - 4\int_{\frac12}^1 \left(2 \alpha_C({\textstyle\frac12}) - \alpha_C(1-u)+ \frac{(1-2u)^2}{4}\right)du + 2\alpha_C(1)-\frac16 \\
& = 4\int_0^{\frac12} \alpha_C(u)du + 4\int_{\frac12}^1 \alpha_C(1-u)du - 4\alpha_C({\textstyle\frac12}) + 2\alpha_C(1)-\frac13 \\
& = 8\int_0^{\frac12} \alpha_C(u)du - 4\alpha_C({\textstyle\frac12}) + 2\left(2\alpha_C({\textstyle\frac12})+\frac14\right)-\frac13 \\
& = 8\int_0^{\frac12} \alpha_C(u)du + \frac16.
\end{align*}

We will prove (h) by contradiction. First notice that $\alpha_C(u) \ge 0$ and 
$\alpha_C(\frac12) = \alpha_0(\frac12) = \frac{2p+1}{24}$ by the definition of $u_0$. So suppose that $\alpha_C(u_1) < \alpha_0(u_1) = \frac12 (u_1-u_0)^2$ for some $u_1 \in (u_0, \frac12)$. We will first prove that $\delta_1 := \delta_C(u_1) < u_1-u_0$. Since $\delta_C$ is 1-Lipshitz, we have for every $t \in [u_0, u_1]$ that $\delta_C(t) \ge \delta_1 + t - u_1$. Thus
$$\alpha_C(u_1) = \int_0^{u_1} \delta_C(t)dt \ge \int_{u_0}^{u_1} \delta_C(t)dt \ge \int_{u_0}^{u_1} (\delta_1 + t - u_1)dt  = \delta_1(u_1-u_0) - \frac12 (u_1-u_0)^2$$
and
$$\delta_1\le \frac{\alpha_C(u_1) + \frac12 (u_1-u_0)^2}{u_1-u_0} < \frac{\frac12 (u_1-u_0)^2 + \frac12 (u_1-u_0)^2}{u_1-u_0} = u_1 - u_0.$$
Now we have for every $t \in [u_1, \frac12]$ that $\delta_C(t) \le \delta_1 + t - u_1 < t - u_0$. Hence
\begin{align*}
 \alpha_C({\textstyle\frac12}) &= \alpha_C(u_1) + \int_{u_1}^{\frac12} \delta_C(t)dt \\
 &< \alpha_C(u_1) + \int_{u_1}^{\frac12} (t - u_0)dt \\  
 &= \textstyle \alpha_C(u_1) + \frac18 - \frac12u_1^2 - \frac12 u_0(1-2u_1) \\
 &< \textstyle \frac12 (u_1-u_0)^2 + \frac18 - \frac12u_1^2 - \frac12 u_0(1-2u_1) \\
 &= \textstyle \frac12 (\frac12-u_0)^2 = \alpha_0(\frac12), 
\end{align*}
a contradiction. 

Finally, using the definition of $\rho$, (g), and (h), we have
\begin{align*}
\rho(C) &= 12 \int_0^1 \int_0^1 C(u,v) dudv -3 = 96 \int_0^{\frac12} \alpha_C(u)du -1   \\
&\ge 96 \int_0^{\frac12} \alpha_0(u)du -1 = 48 \int_{u_0}^{\frac12} (u-u_0)^2du -1 \\
& = 2(1-2u_0)^3 - 1 = \frac29\sqrt{3}(1+2p)^{3/2} -1.
\end{align*}
\end{proof}

In the following example, we note that all points on the curve $r=\frac29\sqrt{3}(1+2p)^{3/2} -1$ can be attained by shuffles of $M$.

\begin{example} \label{ex:1}
Let $a \in [0, \frac12]$ and let $C_a$ be a shuffle of $M$
$$ C_a = M(3, (a, 1-a), (3, 2, 1), (-1, 1, -1)).$$
Notice that $C_0 = M$ and $C_{\frac12} = W$. We have
$$ \delta_{C_a}(u) = \begin{cases}
			0; & u \le a, \\
			u-a; & a \le u \le 1-a, \\
			2u-1; & 1-a \le u \le 1,
    \end{cases} $$
and
$$ C_a(u, v) = \begin{cases}
            0; & 0 \le u \le a, u \le v \le 1-u, \\
            0; & 0 \le v \le a, v \le u \le 1-v, \\
            u+v-1; & 1-a \le u \le 1, 1-u \le v \le u, \\
            u+v-1; & 1-a \le v \le 1, 1-v \le u \le v, \\
            u-a; & a \le u \le 1-a, u \le v \le 1-a, \\
            v-a; & a \le u \le 1-a, a \le v \le u, 
            \end{cases}$$
It follows that
$$ \phi(C_a) = 6a^2 - 6a + 1 =\tfrac{3}{2}(1-2a)^2-\tfrac{1}{2} $$ 
and  
$$ \rho(C_a) = -16a^3 + 24a^2 - 12a + 1 = 2(1-2a)^3-1,$$
so that $\rho(C_a) = \frac29\sqrt{3}(1+2\phi(C_a))^{3/2} -1$, the point $(\phi(C_a), \rho(C_a))$ lies on the curve $r=\frac29\sqrt{3}(1+2p)^{3/2} -1$ and every point on this curve for $p \in [-\frac12, 1]$ is attained.  The scatterplot of copula $C_a$ is shown in Figure \ref{fig:ex}.  \end{example}

\begin{figure}[ht] 
    \centering
    \includegraphics[width=5cm]{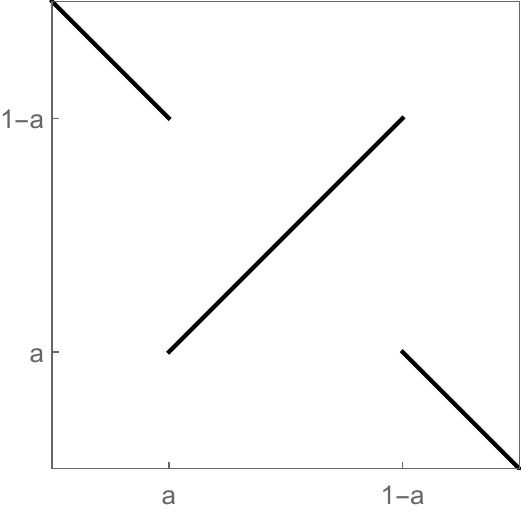}
    \caption{The scatterplot of copula $C_a$ from Example \ref{ex:1}. }
    \label{fig:ex}
\end{figure}

\section{Lower bound - general case}\label{sec:general}

Let $U$ and $V$ be uniformly distributed random variables on interval $\II$ and let $C\in\CC$ be their copula.
In this section we are going to prove the lower bound
$$\frac29\sqrt{3}(1+2\phi(C))^{3/2}-1 \le \rho(C)$$
holds for arbitrary copula $C$.
Define two function depending on copula $C \in \CC$ by
$$f(C) = \rho(C) - \frac29\sqrt{3}(1+2\phi(C))^{3/2} +1$$
and
$$q(C) = P(U=V) + P(U=1-V) .$$
Note that in order to prove the above bound we need to prove that $f(C) \geq 0$ for any copula $C$.
We will reduce the general case to the special case considered in Section~\ref{sec:special} by redistribution of the mass of the copula.
In each step the value of $f(C)$ will decrease while the value of $q(C)$ will increase, until $q(C)$ becomes $1$. 
We believe that this method of mass shifting may also be useful in other considerations. Next lemma describes a single step.

\begin{lemma}\label{lem:step}
Let $C \in \CC$ be a doubly symmetric shuffle of $M$, $C = M(n, J, \pi, \omega)$. Write $J=\{J_1, ..., J_n\}$. Suppose that for some $i\in \{1, 2, ..., n\}$ we have $J_i \times J_{\pi(i)} = [a, a+x] \times [b, b+x]$ with $a < b < 1-a$, $\omega(i) = 1$, and $x > 0$. Then there exists doubly symmetric shuffle of $M$, $C' = M(n, J, \pi', \omega')$ such that 
$f(C') < f(C)$ and $q(C') = q(C) + 2x$ in the case $\pi(i) =n+1-i$ or $q(C') = q(C) + 4x$ otherwise.
\end{lemma}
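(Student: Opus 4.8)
The plan is to construct $C'$ explicitly by pushing the mass of the given cell onto the two diagonals, and then to estimate $f(C')-f(C)$ directly from the defining integrals.

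First I would pin down which cells carry the mass that must be moved. By Lemma~\ref{lem:shuffle_prop}(c) the cell $J_i\times J_{\pi(i)}=[a,a+x]\times[b,b+x]$ comes with its orbit under the two reflections, namely $J_{\pi(i)}\times J_i$, $J_{n+1-i}\times J_{n+1-\pi(i)}$ and $J_{n+1-\pi(i)}\times J_{n+1-i}$, all of side $x$; by condition (iii) and $\omega(i)=1$ they are all straight. The blocks involved are $J_i,J_{\pi(i)},J_{n+1-i},J_{n+1-\pi(i)}$, each of length $x$, sitting at $[a,a+x],[b,b+x],[1-a-x,1-a],[1-b-x,1-b]$ by Lemma~\ref{lem:shuffle_prop}(b). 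The hypothesis $a<b<1-a$ guarantees that none of these cells meets either diagonal (the main one since $a\neq b$, the opposite one since $\omega\equiv1$ places the mass on the other diagonal of the cell), so this mass contributes $0$ to $q(C)$. When $\pi(i)=n+1-i$ the four cells collapse to two, which is exactly the dichotomy in the statement.

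Next I would define $C'=M(n,J,\pi',\omega')$ by altering $\pi,\omega$ only on the index set $K=\{i,\pi(i),n+1-i,n+1-\pi(i)\}$. There are two symmetric ways to land the orbit on a diagonal: the \emph{main-diagonal move} sets $\pi'(k)=k$, $\omega'(k)=1$ for $k\in K$, sending the mass to the cells $J_k\times J_k$; the \emph{opposite-diagonal move} sets $\pi'(k)=n+1-k$, $\omega'(k)=-1$, sending it to $J_k\times J_{n+1-k}$. In both cases conditions (i)–(iv) are readily checked to persist, the key points being that $K$ is closed under $k\mapsto n+1-k$ and that every $J_k$, $k\in K$, has length $x$; hence $C'$ is again a doubly symmetric shuffle. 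Since all the moved mass ($4x$ generically, $2x$ when $\pi(i)=n+1-i$) now lies on a diagonal, $q(C')=q(C)+4x$ (resp.\ $+2x$), as claimed.

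The remaining and main task is to show that one of these choices strictly lowers $f$. I would compute the increments exactly, using $\int_{\II^2}C\,du\,dv=\EE[UV]$ and $\int_0^1C(u,u)\,du=1-\EE[\max(U,V)]$, so that by \eqref{rho} and \eqref{phi} each change reduces to integrals of $(a+s)(b+s)$, $(a+s)^2$, $(a+s)(1-a-s)$ and the corresponding maxima over the finitely many affected cells. For the main-diagonal move this gives $\rho(C')-\rho(C)=24(b-a)^2x>0$ and $\phi(C')-\phi(C)=12(b-a)x>0$ (both halved when $\pi(i)=n+1-i$), while the opposite-diagonal move yields a negative $\rho$-increment. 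Writing $g(t)=\tfrac29\sqrt3\,(1+2t)^{3/2}$, so that $f=\rho-g(\phi)+1$, the sign of $f(C')-f(C)$ is governed by the competition between the change in $\rho$ and the change in $g(\phi)$, and convexity of $g$ is the main lever: by a mean-value estimate $g(\phi')-g(\phi)$ is controlled by $g'$ at an intermediate point. The hard part is precisely this sign analysis, since neither move works uniformly — the main-diagonal move lowers $f$ when $1+2\phi(C)$ is not too small compared with $3(b-a)^2$, whereas the opposite-diagonal move is the one that succeeds when $C$ is near $W$. I would therefore split according to the size of $1+2\phi(C)$ versus $3(b-a)^2$, take the main-diagonal move in the first regime and the opposite-diagonal move in the second, and verify that the two regimes overlap, so that at least one choice always gives $f(C')<f(C)$. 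A clean confirming instance is $\pi(i)=n+1-i$ with the cell lying off-centre: there the opposite-diagonal move is a pure reorientation ($\omega$-flip) that leaves $\phi$ fixed and lowers $\rho$ by $4x^3$, so $f$ strictly decreases.
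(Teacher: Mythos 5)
Your two moves are exactly the paper's constructions (the opposite-diagonal move, $\pi'(j)=n+1-j$, $\omega'(j)=-1$ on the orbit, is the paper's Cases I--III; the main-diagonal move, $\pi'(j)=j$, is its Case IV), and your proposed dichotomy between $1+2\phi(C)$ and $3(b-a)^2$ is precisely the paper's split between Cases III and IV, namely $b\gtrless a+\frac{1}{\sqrt{3}}\sqrt{1+2\phi(C)}$. Your treatment of the main-diagonal regime is essentially complete and correct: the increments $\rho(C')-\rho(C)=24x(b-a)^2$ and $\phi(C')-\phi(C)=12x(b-a)$ (halved when $\pi(i)=n+1-i$) are valid, and in fact your computation via $\EE[UV]$ and $\EE[\max(U,V)]$ makes them position-independent, which is a small streamlining over the paper; convexity of $g(t)=\frac29\sqrt{3}(1+2t)^{3/2}$ then gives $f(C')-f(C)\le 24x(b-a)\bigl((b-a)-\frac{1}{\sqrt{3}}\sqrt{1+2\phi(C)}\bigr)<0$ whenever $3(b-a)^2<1+2\phi(C)$.

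The genuine gap sits exactly where you flag "the hard part": you never prove that the opposite-diagonal move strictly decreases $f$ throughout the complementary regime $b-a\ge\frac{1}{\sqrt{3}}\sqrt{1+2\phi(C)}$. The only instance you verify is $\pi(i)=n+1-i$, where the move is an $\omega$-flip that leaves $\phi$ fixed, so the claim is trivial; the same easy argument covers $b\ge\frac12$ (the paper's Case II, where $C-C'$ vanishes on the main diagonal). But when the cell lies strictly below the center line, $b+x\le\frac12$ --- the typical situation in this regime --- the opposite-diagonal move strictly decreases $\phi$ as well, by $12x(1-2b-x)$, while decreasing $\rho$ by $24x(1-a-b-x)^2+8x^3$, so whether $f$ drops is a genuine competition between two negative increments, not a one-sided estimate. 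The paper closes this (its Case III) by the chain: the mean-value bound $g(\phi(C))-g(\phi(C'))\le\frac{2}{\sqrt{3}}\sqrt{1+2\phi(C)}\cdot 12x(1-2b-x)$, then $1-2b-x\le 1-a-b-x$, and finally $1-a-b-x\ge b-a+x\ge\frac{1}{\sqrt{3}}\sqrt{1+2\phi(C)}+x$ (using $2b+2x\le 1$ together with the regime hypothesis), which yields $f(C)-f(C')\ge 8x^3+24x^2(1-a-b-x)>0$. None of this appears in your proposal, and without it you have not shown that your two regimes actually cover every admissible position of the cell, which is the crux of the lemma; "verify that the two regimes overlap" is precisely the statement left unproved.
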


\begin{proof}
We have $$V_C(J_i \times J_{\pi(i)}) = V_C([a, a+x] \times [b, b+x])=x$$ and corresponding reflected squares are $J_{\pi(i)} \times J_i = [b, b+x] \times [a, a+x]$, $J_{n+1-i} \times J_{\pi(n+1-i)} = [1-a-x, 1-a] \times [1-b-x, 1-b]$, and $J_{\pi(n+1-i)} \times J_{n+1-i}=[1-b-x, 1-b] \times [1-a-x, 1-x]$.
Furthermore, $\omega(\pi(i)) = \omega(n+1-i) = \omega(\pi(n+1-i)) = 1$. Since $a<b<1-a$, we have $a<\frac12$, $i \le \frac{n}{2}$ and $i < \pi(i) \le n+1-i$.
We will consider several cases.

{\it Case I:} Suppose that $\pi(i) = n+1-i$, so $b=1-a-x$ and in $C$ we have two segments crossing the opposite diagonal. Define
$$\pi'=\pi \text{ and } \omega'(j) = \begin{cases}
			-1; & j \in \{i,n+1-i\}, \\
			\omega(j); &\text{otherwise}.
    \end{cases} $$
The copula $C'$ is doubly symmetric shuffle. Figure \ref{fig:C-C'-I} shows
the mass distribution of the difference of copulas $C$ and $C'$, and the graph of the function $C-C'$. The mass of $C'$ is negative in the difference, it is shown dashed.

\begin{figure}[!ht]
    \centering
    \includegraphics{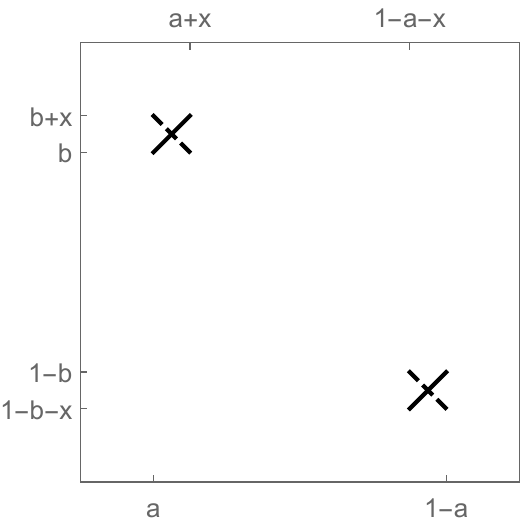}
    \includegraphics{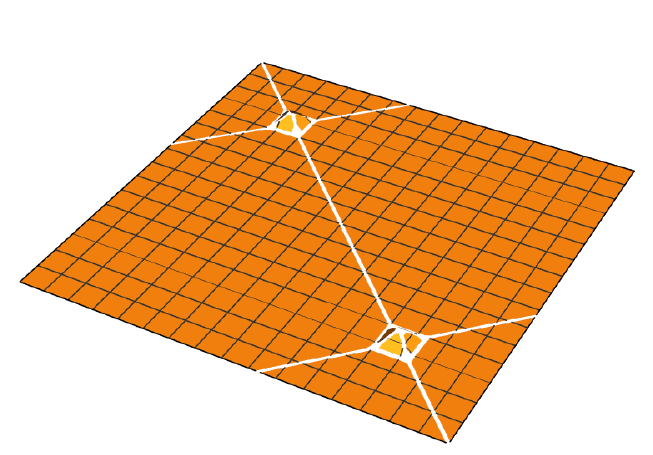}
    \caption{The mass distribution (left) and the graph (right) of the difference of copulas considered in the proof of Lemma~\ref{lem:step} in \emph{Case I}.}
    \label{fig:C-C'-I}
\end{figure}

We have 
\begin{align*}
C(u,v) - C'(u,v) &= \max\{0, \min\{u-a, v-(1-a-x), 1-a-v, a+x-u\}\} \\
                 & \ \ \  + \max\{0, \min\{v-a, u-(1-a-x), 1-a-u, a+x-v\}\},
\end{align*}
so $\delta_C = \delta_{C'}$ and $\phi(C) = \phi(C')$. Furthermore,
$$\rho(C) - \rho(C') = 12 \int_0^1 \int_0^1 (C(u,v) - C'(u,v))dudv = 24V,$$
where $V$ is the volume of a pyramid having the base a square with the side $x$  and height $\frac{x}{2}$, so $V=\frac{x^3}{6}$. We thus have $f(C) = f(C')+4x^3 > f(C')$, and since the two new segments lie on the opposite diagonal also
$q(C') = q(C) + 2x$. 

{\it Case II:} Suppose that $\frac{n}{2} < \pi(i) < n+1-i$, so $\frac12 \le b \le 1-a-2x$. Define
\begin{align*}
\pi'(j)&=\begin{cases}
			n+1-j; & j \in \{i,n+1-i,\pi(i),\pi(n+1-i)\}, \\
			\pi(j); &\text{otherwise}, 
		\end{cases}
			\text{ and } \\
\omega'(j) &= \begin{cases}
			-1; & j \in \{i,n+1-i,\pi(i),\pi(n+1-i)\}, \\
			\omega(j); &\text{otherwise}.
    \end{cases} 
\end{align*}
Figure \ref{fig:C-C'-II} shows
the mass distribution of the difference of copulas $C$ and $C'$, and the graph of the function $C-C'$.

\begin{figure}[!ht]
    \centering
    \includegraphics{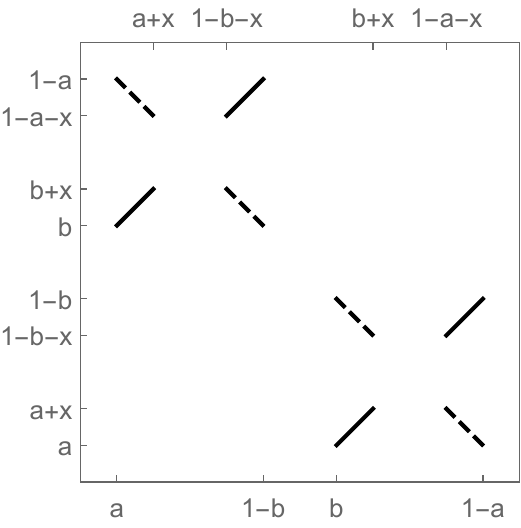}
    \includegraphics{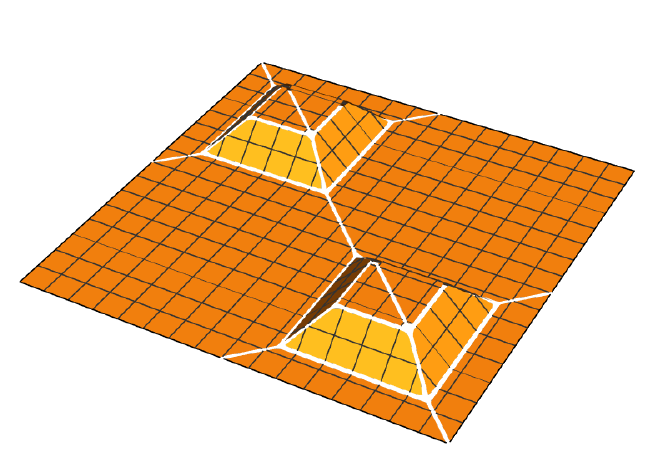}
    \caption{The mass distribution (left) and the graph (right) of the difference of copulas considered in the proof of Lemma~\ref{lem:step} in \emph{Case II}.}
    \label{fig:C-C'-II}
\end{figure}

We have 
\begin{align*}
C(u,v) - C'(u,v) &= \max\{0, \min\{u-a, v-b, 1-a-v, 1-b-u, x\}\} \\
                 & \ \ \ + \max\{0, \min\{v-a, u-b, 1-a-u, 1-b-v, x\}\},
\end{align*}
so again $\delta_C = \delta_{C'}$ and $\phi(C) = \phi(C')$. Furthermore,
$\rho(C) - \rho(C') = 24V,$
where $V$ is the volume of a square frustum having the lower base a square with the side $1-a-b$, the upper base a square with the side $1-a-b-2x$ and height $x$, so 
$$V=\textstyle \frac16 (1-a-b)^3 - \frac16 (1-a-b-2x)^3 = x(1-a-b-x)^2 + \frac13 x^3.$$ 
We thus have $f(C) = f(C')+24x(1-a-b-x)^2 + 8x^3 > f(C')$, and since the four new segments lie on the opposite diagonal also
$q(C') = q(C) + 4x$.

{\it Case III:} Suppose that $\pi(i) \le \frac{n}{2}$, so $b \le \frac12-x$ and assume also $b \ge a + \frac{1}{\sqrt{3}}\sqrt{1+2\phi(C)}$. Define as in Case II
\begin{align*}
\pi'(j)&=\begin{cases}
			n+1-j; & j \in \{i,n+1-i,\pi(i),\pi(n+1-i)\}), \\
		    \pi(j); &\text{otherwise}, 
		\end{cases}
			\text{ and } \\
\omega'(j) &= \begin{cases}
			-1; & j \in \{i,n+1-i,\pi(i),\pi(n+1-i)\}, \\
			\omega(j); &\text{otherwise}.
    \end{cases} 
\end{align*}
Figure \ref{fig:C-C'-III} shows
the mass distribution and the graph of the function $C-C'$ in this case.

\begin{figure}[!ht]
    \centering
    \includegraphics{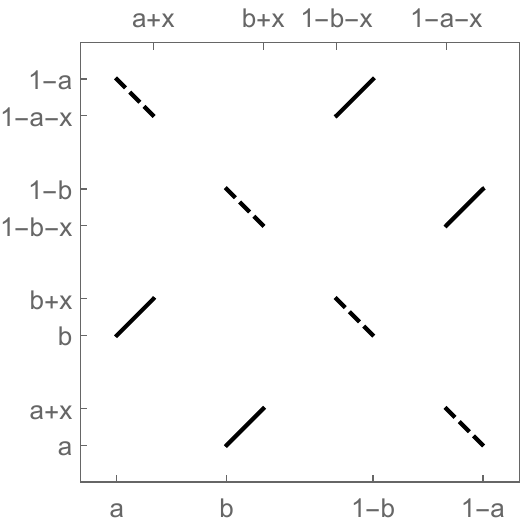}
    \includegraphics{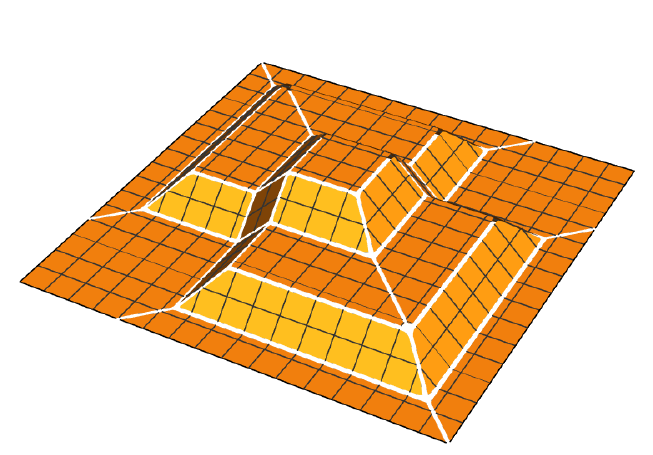}
    \caption{The mass distribution (left) and the graph (right) of the difference of copulas considered in the proof of Lemma~\ref{lem:step} in \emph{Case III}.}
    \label{fig:C-C'-III}
\end{figure}

Again we have 
\begin{align*}
C(u,v) - C'(u,v) &= \max\{0, \min\{u-a, v-b, 1-a-v, 1-b-u, x\}\} \\
                 & \ \ \ + \max\{0, \min\{v-a, u-b, 1-a-u, 1-b-v, x\}\},
\end{align*}
so again $\rho(C) = \rho(C')+24x(1-a-b-x)^2 + 8x^3$. But now
$$\delta_C(u) - \delta_{C'}(u) = 2\max\{0, \min\{u-b, 1-b-u, x\}\},$$
so 
$$\phi(C) = \phi(C') + 6 \int_0^1 (\delta_C(u) - \delta_{C'}(u))du = 3(1-2b)^2-3(1-2b-2x)^2 = 12x(1-2b-x).$$ 
Denote by $d=12x(1-2b-x)$. Now
\begin{align*}
f(C)-f(C') &= \rho(C) - \rho(C') - \frac29\sqrt{3}(1+2\phi(C))^{3/2} +\frac29\sqrt{3}(1+2\phi(C'))^{3/2} \\
& = 24x(1-a-b-x)^2 + 8x^3 + \frac29\sqrt{3}\left((1+2\phi(C)-2d)^{3/2} - (1+2\phi(C))^{3/2}\right).
\end{align*}
Using Lagrange theorem there exists $t\in [0, d]$ such that
$$(1+2\phi(C)-2d)^{3/2} - (1+2\phi(C))^{3/2} = - 3d (1+2\phi(C)-2t)^{1/2} \ge - 3d (1+2\phi(C))^{1/2},$$
so
\begin{align*}
f(C)-f(C') &\ge 24x(1-a-b-x)^2 + 8x^3 - \frac29\sqrt{3}\cdot 3d(1+2\phi(C))^{1/2} \\
& = 24x(1-a-b-x)^2 + 8x^3 - \frac{2d}{\sqrt{3}}\sqrt{1+2\phi(C)} \\
& = 24x(1-a-b-x)^2 + 8x^3 - 24x(1-2b-x)\frac{1}{\sqrt{3}}\sqrt{1+2\phi(C)} \\
& \ge 24x(1-a-b-x)^2 + 8x^3 - 24x(1-a-b-x)\frac{1}{\sqrt{3}}\sqrt{1+2\phi(C)} \\
& = 8x^3 + 24x(1-a-b-x)\left(1-a-b-x - \frac{1}{\sqrt{3}}\sqrt{1+2\phi(C)}\right)
\end{align*}
We now use the assumptions $b \le \frac12-x$, so $1 \ge 2b+2x$, and $b \ge a+\frac{1}{\sqrt{3}}\sqrt{1+2\phi(C)}$ to estimate further
\begin{align*}
f(C)-f(C') & \ge 8x^3 + 24x(1-a-b-x)\left(2b+2x-a-b-x - \frac{1}{\sqrt{3}}\sqrt{1+2\phi(C)}\right) \\
& = 8x^3 + 24x(1-a-b-x)\left(x + b - a - \frac{1}{\sqrt{3}}\sqrt{1+2\phi(C)}\right) \\
& \ge 8x^3 + 24x^2(1-a-b-x) >0. 
\end{align*}
Finally,  since the four new segments lie on the opposite diagonal,  we have 
$q(C') = q(C) + 4x$ as in the previous case. 

{\it Case IV:} Suppose that $i< \pi(i) \le \frac{n}{2}$, so $a+x \le b \le \frac12-x$ and assume also $b < a + \frac{1}{\sqrt{3}}\sqrt{1+2\phi(C)}$. Define $$\pi'(j)=\begin{cases}
			j; & j\in \{i,n+1-i,\pi(i),\pi(n+1-i)\}, \\
			\pi(j); &\text{otherwise}, 
		\end{cases}
			\text{ and } \\
\omega' = \omega.$$
Figure \ref{fig:C-C'-VI} shows
the mass distribution and the graph of the function $C-C'$.

\begin{figure}[!ht]
    \centering
    \includegraphics{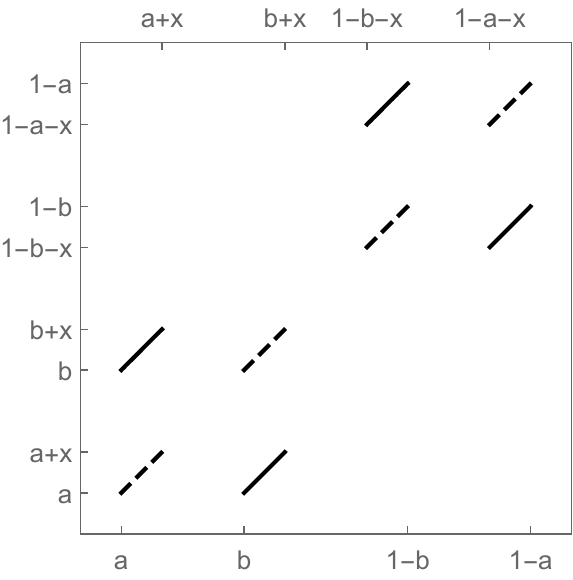}
    \includegraphics{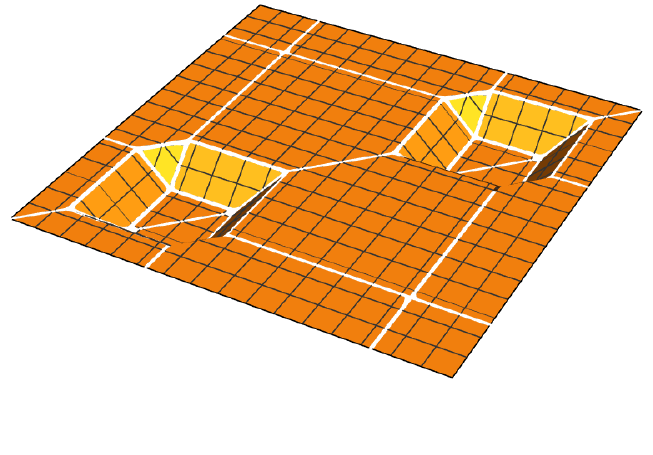}
    \caption{The mass distribution (left) and the graph (right) of the difference of copulas considered in the proof of Lemma~\ref{lem:step} in \emph{Case VI}.}
    \label{fig:C-C'-VI}
\end{figure}

This time we have $C' \ge C$ so
\begin{align*}
C(u,v) - C'(u,v) &= -(C'(u,v) - C(u,v)) \\
&= -\max\{0, \min\{u-a, v-a, b+x-u, b+x-v, x, \\
                 & \ \ \ \ \ \ \ \ \ \ \ \ \ \ \ \ \ \ \ \ b-a+u-v, b-a+v-u\}\} \\
                 & \ \ \ - \max\{0, \min\{u-1+b+x, v-1+b+x, 1-a-u, 1-a-v, x, \\     
                 & \ \ \ \ \ \ \ \ \ \ \ \ \ \ \ \ \ \ \ \ b-a+u-v, b-a+v-u\}\}.
\end{align*}
It follows that
$$\delta_{C}(u)  - \delta_{C'}(u) = -\max\{0, \min\{u-a, b+x-u, x\}\}-\max\{0, \min\{u-1+b+x, 1-a-u, x\}\},$$
so $\phi(C) = \phi(C') - 12x(b-a)$. Furthermore $\rho(C) = \rho(C') - 24V,$
where $V$ is the volume of a square frustum having the lower base a square with the side $b-a+x$, the upper base a square with the side $b-a-x$, height $x$, and two corners cut off, so
$$V=\textstyle \frac16 (b-a+x)^3 - \frac16 (b-a-x)^3 - 2 \cdot \frac16 x^3 = x(b-a)^2$$  
and $\rho(C) = \rho(C') - 24x(b-a)^2.$
Similarly as in the previous case we estimate
\begin{align*}
f(C)-f(C') &= \rho(C) - \rho(C') - \frac29\sqrt{3}(1+2\phi(C))^{3/2} +\frac29\sqrt{3}(1+2\phi(C'))^{3/2} \\
& = - 24x(b-a)^2 + \frac29\sqrt{3}\left((1+2\phi(C)+2d)^{3/2} - (1+2\phi(C))^{3/2}\right)
\end{align*}
where $d= 12x(b-a)$, so there exists $t\in [0, d]$ such that
\begin{align*}
f(C)-f(C') &= - 24x(b-a)^2 + \frac29\sqrt{3}\cdot 3d(1+2\phi(C)+2t)^{1/2} \\
& \ge - 24x(b-a)^2 + \frac{2d}{\sqrt{3}}\sqrt{1+2\phi(C)} \\
& = - 24x(b-a)^2 + 24x(b-a)\frac{1}{\sqrt{3}}\sqrt{1+2\phi(C)} \\
& = 24x(b-a)\left(a-b+\frac{1}{\sqrt{3}}\sqrt{1+2\phi(C)} \right) > 0.
\end{align*}
Finally,  since the four new segments lie on the main diagonal,  we have 
$q(C') = q(C) + 4x$ as in the previous case. 
\end{proof}

We are now finally ready to prove our the lower bound.

\begin{theorem}\label{thm:main}
For any copula $C$ we have
$$ \frac29\sqrt{3}(1+2\phi(C))^{3/2}-1 \le \rho(C).$$
For any value $\phi(C) \in [-\frac12,1]$ the bound is attained by some shuffle of $M$.
\end{theorem}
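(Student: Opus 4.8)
The plan is to reduce the general inequality to the special case of Proposition~\ref{prop:x}(i) by successive mass redistribution, and to read off attainment directly from Example~\ref{ex:1}.

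First I would reduce to doubly symmetric copulas. Both $\rho$ and $\phi$ are affine functionals of the copula, and both are invariant under transposition and under the survival map: $\rho$ because it is a concordance measure (so $\rho(C)=\rho(C^t)=\rho(\widehat C)$ by (C1) and (C8)), and $\phi$ because it satisfies (C1) and (C8) as well. Hence for an arbitrary copula $C$ the symmetrized object $\tilde C=\frac14\bigl(C+C^t+\widehat C+\widehat{C^t}\bigr)$ is again a copula by convexity of $\CC$; since $\cdot^t$ and $\widehat{\,\cdot\,}$ are affine and fix the total weight, $\tilde C$ is invariant under both, i.e. doubly symmetric, and $\rho(\tilde C)=\rho(C)$, $\phi(\tilde C)=\phi(C)$, so $f(\tilde C)=f(C)$. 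Thus it suffices to prove $f\ge 0$ on doubly symmetric copulas. Furthermore, by Lemma~\ref{lem:approx} every doubly symmetric copula is a uniform limit of straight doubly symmetric shuffles of $M$, and $f$ is continuous for uniform convergence (both $\rho$ and $\phi$ satisfy (C5)); so it is enough to establish $f\ge 0$ on straight doubly symmetric shuffles of $M$.

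Next I would run the mass-shifting iteration of Lemma~\ref{lem:step}. Starting from a straight doubly symmetric shuffle, all mass lies on slope-$+1$ segments, so every square carrying mass off the main diagonal has the form treated by the lemma, its reflection orbit being handled simultaneously. As long as some square $J_i\times J_{\pi(i)}=[a,a+x]\times[b,b+x]$ with $\omega(i)=1$, $x>0$ and $a<b<1-a$ remains, I apply the lemma to pass to a doubly symmetric shuffle $C'$ with $f(C')<f(C)$ and $q(C')>q(C)$. The crucial bookkeeping observation is that each step relocates the chosen orbit of mass \emph{onto} one of the two diagonals (onto the opposite diagonal with $\omega=-1$ in Cases I--III, onto the main diagonal with $\omega=1$ in Case IV), leaves all other squares untouched, and therefore never manufactures a new slope-$+1$ square off the main diagonal nor a slope-$-1$ square off the opposite diagonal. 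Consequently the number of off-main-diagonal $\omega=1$ squares strictly decreases, so after finitely many steps none remains; at that moment every $\omega=1$ segment sits on the main diagonal and every $\omega=-1$ segment on the opposite diagonal, i.e. the terminal shuffle $C^\ast$ satisfies $q(C^\ast)=1$.

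Finally, $C^\ast$ has all its mass on the two diagonals, so it falls under Proposition~\ref{prop:x}, whose part~(i) gives $f(C^\ast)\ge 0$. Since $f$ decreased at every step, $f(C)\ge f(C^\ast)\ge 0$ for the original straight doubly symmetric shuffle, and the continuity and symmetrization reductions above then yield $f\ge 0$, that is the claimed bound, for every copula. For attainment, Example~\ref{ex:1} provides for each $a\in[0,\tfrac12]$ a shuffle $C_a$ of $M$ with $\phi(C_a)=\tfrac32(1-2a)^2-\tfrac12$ and $\rho(C_a)=2(1-2a)^3-1$ lying exactly on the curve $r=\frac29\sqrt3(1+2p)^{3/2}-1$; as $a$ runs over $[0,\tfrac12]$ the value $\phi(C_a)$ sweeps all of $[-\tfrac12,1]$, so the bound is attained for every admissible value of Spearman's footrule. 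I expect the main obstacle to be the termination bookkeeping of the iteration, namely verifying that mass-shifting never produces new off-diagonal segments and hence halts precisely at $q=1$; the per-step decrease $f(C')<f(C)$ is already supplied by Lemma~\ref{lem:step}.
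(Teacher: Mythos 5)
Your proof is correct and takes essentially the same route as the paper: symmetrize to a doubly symmetric copula, approximate by straight doubly symmetric shuffles via Lemma~\ref{lem:approx}, iterate the mass-shifting of Lemma~\ref{lem:step} until all mass lies on the two diagonals, then invoke Proposition~\ref{prop:x}(i) for the terminal shuffle and Example~\ref{ex:1} for attainment. The only cosmetic differences are that you replace the paper's explicit $\varepsilon$--Lagrange estimate $f(C)-f(C')>-24\varepsilon$ by a qualitative continuity argument for $f$, and your termination bookkeeping for the iteration (no step creates a new off-diagonal slope-$+1$ square) is, if anything, spelled out more carefully than in the paper.
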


\begin{proof}
Let $C$ be a copula such that $\phi(C)=p$ for some $p\in[-\frac12,1]$. Define a copula $\widetilde{C}=\frac14(C+C^t+\widehat{C}+\widehat{C}^t)$.
Since the transformations $C \mapsto C^t$ and $C \mapsto \widehat{C}$ commute it is easy to verify that $\widetilde{C}$ is a doubly symmetric copula.
Furthermore, $\phi(\widetilde{C})=\phi(C)$ and $\rho(\widetilde{C})=\rho(C)$ because $\rho$ and $\phi$ preserve convex combinations and $\rho(C^t)=\rho(\widehat{C})=\rho(C)$ and $\phi(C^t)=\phi(\widehat{C})=\phi(C)$.
Thus, by replacing $C$ with $\widetilde{C}$, we may assume without loss of generality that $C$ is a doubly symmetric copula.

Let $\varepsilon >0$. By Lemma~\ref{lem:approx} there exists a straight doubly symmetric shuffle $C'=M(n,J,\pi,\omega)$ such that
$\sup_{(u,v) \in \II^2} |C(u,v)-C'(u,v)| <\varepsilon.$
Hence,
\begin{equation}\label{eq:est}
|\rho(C)-\rho(C')| < 12\varepsilon \qquad\mbox{and}\qquad |\phi(C)-\phi(C')| < 6\varepsilon.
\end{equation}
By Lagrange theorem we have
\begin{equation}\label{eq:f}
\begin{aligned}
    f(C)-f(C') &=\rho(C)-\rho(C')-\Big(\frac29\sqrt{3}(1+2\phi(C))^{3/2}-\frac29\sqrt{3}(1+2\phi(C'))^{3/2}\Big)\\
    &= \rho(C)-\rho(C')-\frac{2}{\sqrt{3}}(1+2t)^{1/2}\big(\phi(C)-\phi(C')\big)
    \end{aligned}
\end{equation}
for some $t$ between $\phi(C)$ and $\phi(C')$, so that $t\le 1$. Using the estimates \eqref{eq:est} in equation \eqref{eq:f} and the estimate for $t$ we get
\begin{equation}\label{eq:f2}
    f(C)-f(C')>-12\varepsilon-\frac{2}{\sqrt{3}}(1+2t)^{1/2} \cdot 6\varepsilon \ge -24\varepsilon.
\end{equation}
By Lemma~\ref{lem:shuffle_prop} all the mass of $C'$ is concentrated on squares $J_i \times J_{\pi(i)}$, $i=1,2,\ldots,n$, and the squares $J_i \times J_{\pi(i)}$, $ J_{\pi(i)} \times J_{i}$, $J_{n-i+1} \times J_{\pi(n-i+1)}$ and $J_{\pi(n-i+1)} \times J_{n-i+1}$ have the same $C'$-volume.
So, as long as $q(C')<1$, there exists $i\in \{1,2,\ldots,n\}$ such that $J_i \times J_{\pi(i)} = [a, a+x] \times [b, b+x]$ with $a < b < 1-a$ and $x>0$.
We now apply Lemma~\ref{lem:step} to all squares with this property, one at a time. Each time we apply the lemma, the function $q(C')$ increases and the function $f(C')$ decreases.
When this process ends, we are left with a doubly symmetric shuffle $C''$ such that $q(C'')=1$ and $f(C'') \leq f(C')$.
By Proposition~\ref{prop:x} $(i)$, we have $f(C'') \ge 0$, and hence $f(C') \ge 0$.
Together with the estimate \eqref{eq:f2}, this implies
$f(C) > -24\varepsilon$.
Sending $\varepsilon$ to $0$ we obtain $f(C) \geq 0$, which proves the bound.

By Example~\ref{ex:1} the bound is attained by a shuffle of $M$.
\end{proof}

\section{Upper bound}\label{sec:upper}

We will first prove the estimate for the upper bound of $\rho(C)$ in terms of $\phi(C)$ in the case when $C$ is the diagonal copula $K_\delta$ (see below) and the diagonal is nice enough.

Let $C$ be a doubly symmetric copula. Then its diagonal $\delta(u) = C(u, u)$ satisfies 
\begin{equation}\label{eq:delta-sym}
    \delta(u) = 2u-1 +\delta(1-u).
\end{equation}
We call such a diagonal \emph{symmetric diagonal}.
It is well known that the diagonal is increasing and 2-Lipshitz, so it is differentiable almost everywhere on $\II$ and $\delta'(u) \in [0,2]$ where it exists. We are going to assume that it is differentiable everywhere, its derivative is continuous, and that
\begin{equation}\label{eq:delta-smooth}
    0 < \delta'(u) < 2 \text{ for all except possibly finitely many points } u \in \II.
\end{equation}
If follows that $\delta$ is strictly increasing, so it is bijective and its inverse $\delta^{-1}: \II \to \II$ exists. 
Let us introduce three auxiliary functions $\alpha, g, h: \II \to \RR$ as follows
\begin{align}\label{eq:aux}
    \alpha(u) &= \int_0^u \delta(t)dt, \nonumber\\
    g(u) &= 2u - \delta(u), \\
    h(u) &= \delta^{-1}(g(u)). \nonumber
\end{align}
It holds that $g(0) = h(0) = 0, g(1) = h(1) = 1$. Since $\delta(u) \le u$ for $u \in \II$, we have $g(u) \ge u$ and $h(u) \ge u$ for every $u \in \II$. It follows immediately from  \eqref{eq:delta-smooth} that $g$ and $h$ are bijective so that the inverses $g^{-1}, h^{-1}: \II \to \II$ exist. Furthermore, $g$ is differentiable everywhere on $\II$ and its derivative is continuous. Also, by \eqref{eq:delta-smooth}, $h$ is differentiable everywhere except possibly in finitely many points $u \in \II$ and its derivative is continuous where it exists.

Fredrichs and Neslen in \cite{FrNe97} introduced \emph{diagonal copula} $K_\delta$, given by
$$ K_\delta(u,v) = \min\left\{u,v, \frac{\delta(u)+\delta(v)}{2}\right\}.$$
Notice that $u = \frac12(\delta(u)+\delta(v))$ holds if and only if $v = h(u)$, so that
\begin{equation}\label{eq:K_delta}
    K_\delta(u,v) =  \begin{cases}
            u; & v \ge h(u), \\
            \frac{\delta(u)+\delta(v)}{2}; & v < h(u), u < h(v), \\
            v; & u \ge h(v). 
            \end{cases}
\end{equation}

The following proposition establishes the upper bound for the diagonal copula by simplifying its double integral over the unit square $\II^2$.

\begin{proposition}\label{prop:y}
  Let $\delta$ be a symmetric differentiable diagonal with continuous derivative that satisfies \eqref{eq:delta-smooth}, $\alpha, g, h:\II \to \II$ auxiliary functions defined by \eqref{eq:aux}, and $K_\delta$ diagonal copula. Then the following holds: 
  \begin{enumerate}
      \item[(a)] $$g^{-1}(u) = 1 - \delta^{-1}(1-u) \text{ for all } u \in \II.$$
      \item[(b)] $$\int_0^1 u\delta(u)du = \frac12\alpha(1)+\frac{1}{12}.$$
      \item[(c)] $$\int_0^1 \alpha(u)du = \frac12\alpha(1)-\frac{1}{12}.$$
      \item[(d)] $$\int_0^1 \delta^{-1}(u)du = 1 - \alpha(1).$$
      \item[(e)] $$\int_0^1 (\delta^{-1}(u))^2du = \frac56 - \alpha(1).$$
      \item[(f)] $$\int_0^1 \alpha(h(u))du = \alpha(1) - 1 + \int_0^1 (4u-\delta(u)-u\delta'(u))h(u)du.$$
      \item[(g)] $$\int_0^1 \int_0^1 K_\delta(u,v) dudv = \int_0^1 g^{-1}(u)\delta^{-1}(u)du.$$
      \item[(h)] $$\int_0^1 \int_0^1 K_\delta(u,v) dudv \le 2\alpha(1) - 2\alpha(1)^2 - \frac16.$$
      \item[(i)] $$\rho(K_\delta) \le 1-\frac23 (1-\phi(K_\delta))^2.$$
  \end{enumerate}
\end{proposition}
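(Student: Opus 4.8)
The overall strategy is to establish the integral identities (a)--(f) as a toolkit, prove the formula (g) by an explicit region decomposition of the double integral of $K_\delta$, and then read off (h) and (i) by combining these with a single convexity inequality. For (a) I would check directly that $w := 1 - \delta^{-1}(1-u)$ is a preimage of $u$ under $g$: setting $s = \delta^{-1}(1-u)$, so $\delta(s) = 1-u$, and rewriting \eqref{eq:delta-sym} as $\delta(1-s) = \delta(s) - 2s + 1$, one finds $g(w) = g(1-s) = 2(1-s) - \delta(1-s) = u$. Parts (b) and (c) I would prove together: the substitution $u \mapsto 1-u$ combined with \eqref{eq:delta-sym} turns $\int_0^1 u\delta(u)\,du$ into a linear equation for itself, giving (b), and the integration by parts $\int_0^1 u\delta(u)\,du = \alpha(1) - \int_0^1\alpha(u)\,du$ then yields (c).

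For (d) I would invoke the elementary identity $\int_0^1\delta + \int_0^1\delta^{-1} = 1$, valid for any increasing bijection of $\II$ fixing the endpoints. For (e) the substitution $u = \delta(t)$ turns $\int_0^1(\delta^{-1})^2$ into $\int_0^1 t^2\delta'(t)\,dt$, which after one integration by parts reduces to (b). Part (f) is two integrations by parts: first I move the derivative off $\alpha(h(u))$ using $\alpha' = \delta$ and $\delta(h(u)) = g(u)$, arriving at $\alpha(1) - \int_0^1 u\,g(u)\,h'(u)\,du$, and then I integrate $\int_0^1 u\,g(u)\,h'(u)\,du$ by parts with $P(u) = u\,g(u)$, noting that $P'(u) = 4u - \delta(u) - u\delta'(u)$ is precisely the weight appearing in (f).

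The step I expect to be the main obstacle is (g). Using \eqref{eq:K_delta} I would partition $\II^2$ into $\{v \ge h(u)\}$, $\{h^{-1}(u) < v < h(u)\}$, and $\{v \le h^{-1}(u)\}$ (the last because $u \ge h(v)$ is equivalent to $v \le h^{-1}(u)$). The $u \leftrightarrow v$ symmetry of $K_\delta$ makes the two outer regions contribute equally, leading to
\begin{equation*}
\int_0^1\!\!\int_0^1 K_\delta\,du\,dv = 1 - 2\int_0^1 u\,h(u)\,du + \int_0^1\delta(u)\,h(u)\,du - \int_0^1\delta(u)\,h^{-1}(u)\,du.
\end{equation*}
The awkward term $\int_0^1\delta(u)\,h^{-1}(u)\,du$ I would convert by the substitution $u = h(w)$ (with $\delta(h(w)) = g(w)$) into $\int_0^1 w\,g(w)\,h'(w)\,dw$, which is exactly the integral already evaluated inside (f); feeding this back, the whole expression collapses to $\int_0^1 u\,g'(u)\,h(u)\,du$ since $g' = 2 - \delta'$. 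Finally, the substitution $u = g(w)$ in the target $\int_0^1 g^{-1}(u)\,\delta^{-1}(u)\,du$ (so that $g^{-1}(u) = w$ and $\delta^{-1}(g(w)) = h(w)$) produces the same integral $\int_0^1 w\,g'(w)\,h(w)\,dw$, which proves (g). The delicate part is the region bookkeeping and the sign tracking across the two substitutions and integrations by parts.

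For (h) I would first use (a) and (d) to write $\int_0^1\!\!\int_0^1 K_\delta = \int_0^1\big(1-\delta^{-1}(1-u)\big)\delta^{-1}(u)\,du = (1-\alpha(1)) - \beta$, where $\beta = \int_0^1\delta^{-1}(u)\,\delta^{-1}(1-u)\,du$. The key algebraic observation, obtained by expanding the square of $\bar\psi(u) := \tfrac12\big(\delta^{-1}(u)+\delta^{-1}(1-u)\big)$, is the identity $\beta = 2\int_0^1\bar\psi^2 - \int_0^1(\delta^{-1})^2$; then Cauchy--Schwarz gives $\int_0^1\bar\psi^2 \ge \big(\int_0^1\bar\psi\big)^2 = (1-\alpha(1))^2$, and substituting the value of $\int_0^1(\delta^{-1})^2$ from (e) yields $\beta \ge 2\alpha(1)^2 - 3\alpha(1) + \tfrac76$, which rearranges exactly to (h). Part (i) is then a direct substitution: using $\phi(K_\delta) = 6\alpha(1) - 2$ and $\rho(K_\delta) = 12\int_0^1\!\!\int_0^1 K_\delta - 3$, the bound (h) becomes $\rho(K_\delta) \le 1 - \tfrac23\big(1-\phi(K_\delta)\big)^2$ after simplification.
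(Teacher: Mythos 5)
Your proposal is correct and follows essentially the same route as the paper's proof: the same toolkit (a)--(f) proved by the same substitutions and integrations by parts, the same reduction of (g) to the intermediate integral $\int_0^1 u\,g'(u)h(u)\,du$ followed by the substitution $t=g(u)$, the same expand-the-square-plus-Jensen (Cauchy--Schwarz) argument for (h) using (a), (d), (e), and the same algebraic substitution for (i). The only cosmetic difference is in (g), where you partition the whole square into three bands via $h$ and $h^{-1}$ and symmetrize, handling $\int_0^1 \delta(u)h^{-1}(u)\,du$ by the substitution $u=h(w)$, whereas the paper restricts to the triangle $v\ge u$ and integrates $\tfrac{\delta(u)+\delta(v)}{2}$ in $v$, invoking (b), (c), (f) as stated; both computations collapse to the same expression.
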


\begin{proof}
We first compute
\begin{align*}
   g(1 - \delta^{-1}(1-u)) &= 2(1 - \delta^{-1}(1-u)) - \delta(1 - \delta^{-1}(1-u)) \\
   &= 2 - 2\delta^{-1}(1-u) - \left( 1 - 2\delta^{-1}(1-u) + \delta(\delta^{-1}(1-u))\right) 
\end{align*}
by equality \eqref{eq:delta-sym}, so that
$$ g(1 - \delta^{-1}(1-u)) = 2 - 2\delta^{-1}(1-u) - 1 + 2\delta^{-1}(1-u) - (1-u) = u, $$
which proves (a). 

Next we use equality \eqref{eq:delta-sym} in the integral
$$ \int_0^1 u\delta(u)du = \int_0^1 u(2u-1+\delta(1-u))du = \frac16 + \int_0^1 u\delta(1-u)du.$$
We introduce a new variable $t = 1-u$ to get
\begin{align*}
\int_0^1 u\delta(u)du &= \frac16 + \int_1^0 (1-t)\delta(t)(-dt) = \frac16 + \int_0^1 \delta(t)dt  - \int_0^1 t\delta(t)dt \\
&= \frac16 + \alpha(1) - \int_0^1 u\delta(u)du.
\end{align*}
We express the integral from the obtained equation to get (b). 

To prove (c) we integrate by parts and use (b)
$$ \int_0^1 \alpha(u)du = u\alpha(u)\bigg |_0^1 - \int_0^1 u\delta(u)du =  \alpha(1) - \left(\frac12\alpha(1)+\frac{1}{12}\right) = \frac12\alpha(1)-\frac{1}{12}. $$

To prove (d) we first introduce a new variable by $u=\delta(t)$ in the integral and then integrate by parts
$$ \int_0^1 \delta^{-1}(u)du = \int_0^1 t\delta'(t)dt = t\delta(t)\bigg |_0^1 - \int_0^1 \delta(t)dt = 1 - \alpha(1). $$

In a similar way we prove (e), in the final step we use (b)
$$ \int_0^1 (\delta^{-1}(u))^2du = \int_0^1 t^2\delta'(t)dt = t^2\delta(t)\bigg |_0^1 - \int_0^1 2t\delta(t)dt = 1 - 2\left(\frac12\alpha(1)+\frac{1}{12}\right) = \frac56 - \alpha(1). $$

To prove (f)  we  integrate by parts twice
\begin{align*}
\int_0^1 \alpha(h(u))du &= u\alpha(h(u))\bigg |_0^1 - \int_0^1 u\alpha'(h(u))h'(u)du = \alpha(1) - \int_0^1 ug(u)h'(u)du \\
&= \alpha(1) - ug(u)h(u)\bigg |_0^1 + \int_0^1 (g(u) + ug'(u))h(u)du \\
&= \alpha(1) - 1 + \int_0^1 (4u-\delta(u)-u\delta'(u))h(u)du.
\end{align*}

To prove (g) we first use the symmetry of copula $K_\delta$ and then equation \eqref{eq:K_delta}
\begin{align*}
I &= \int_0^1 \int_0^1 K_\delta(u,v) dudv = 2\int_0^1 \left( \int_u^1 K_\delta(u,v) dv\right) du \\
&= 2\int_0^1 \left( \int_u^{h(u)} \frac{\delta(u)+\delta(v)}{2} dv + \int_{h(u)}^1 u dv\right) du \\ 
&= \int_0^1 \left(\delta(u)(h(u)-u)+\alpha(h(u)) -\alpha(u)\right) du + 2\int_0^1 u(1-h(u)) du \\ 
&= \int_0^1 \delta(u)h(u)du - \int_0^1 u\delta(u)du + \int_0^1 \alpha(h(u)) du  - \int_0^1 \alpha(u)du - \int_0^1 2uh(u) du  +1. 
\end{align*}
Next we use (b), (c), and (f) to get
\begin{align*}
I &= \int_0^1 \delta(u)h(u)du - \left(\frac12\alpha(1)+\frac{1}{12}\right) + \alpha(1) - 1 + \int_0^1 (4u-\delta(u)-u\delta'(u))h(u)du  \\
& \ \ \ \ \ - \left(\frac12\alpha(1)-\frac{1}{12}\right) - \int_0^1 2uh(u) du  +1 \\
&= \int_0^1 (2u - u\delta'(u))h(u)du = \int_0^1 ug'(u)h(u)du. 
\end{align*}
Now we introduce a new variable $t=g(u)$ to get
$$ I = \int_0^1 g^{-1}(t)h(g^{-1}(t))dt = \int_0^1 g^{-1}(t)\delta^{-1}(t)dt.$$

To prove (h) we first use (g), (a), and (d)
\begin{align*}
I &= \int_0^1 \int_0^1 K_\delta(u,v) dudv = \int_0^1 g^{-1}(u)\delta^{-1}(u)du \\
&= \int_0^1 (1 - \delta^{-1}(1-u))\delta^{-1}(u)du = 1 - \alpha(1) - \int_0^1 \delta^{-1}(1-u)\delta^{-1}(u)du.
\end{align*}
Now
$$ \delta^{-1}(1-u)\delta^{-1}(u) = \frac12 \left((\delta^{-1}(1-u)+\delta^{-1}(u))^2 - (\delta^{-1}(1-u))^2 - (\delta^{-1}(u))^2\right),$$
thus
\begin{align*}
I &=  1 - \alpha(1) - \frac12\int_0^1 (\delta^{-1}(1-u)+\delta^{-1}(u))^2 du + \frac12\int_0^1 (\delta^{-1}(1-u))^2 du + \frac12\int_0^1 (\delta^{-1}(u))^2 du\\
&= 1 - \alpha(1) - \frac12\int_0^1 (\delta^{-1}(1-u)+\delta^{-1}(u))^2 du + \frac12\left(\frac56 - \alpha(1)\right) + \frac12\left(\frac56 - \alpha(1)\right)\\
&= \frac{11}{6} - 2\alpha(1) - \frac12\int_0^1 (\delta^{-1}(1-u)+\delta^{-1}(u))^2 du
\end{align*}
by (e). For the remaining integral we use Jensen's inequality, claiming that
$$ \int_0^1 r(s(x)) dx \ge r\left(\int_0^1 s(x) dx\right), $$
where $s:\II \to A$ is nonnegative measurable function and $r:A \to \RR$ is convex function. So
\begin{align*}
\int_0^1 (\delta^{-1}(1-u)+\delta^{-1}(u))^2 du &\ge \left(\int_0^1 (\delta^{-1}(1-u)+\delta^{-1}(u))du\right)^2 \\
&= \left(1-\alpha(1) +1-\alpha(1)\right)^2 = 4(1-\alpha(1))^2
\end{align*}
by (d), thus
$$ I \le \frac{11}{6} - 2\alpha(1) - 2(1-\alpha(1))^2 = 2\alpha(1) - 2\alpha(1)^2 - \frac16.$$

Finally, to prove (i) we use  \eqref{rho}, (h), and \eqref{phi}
\begin{align*}
\rho(K_\delta) &= 12 \int_0^1 \int_0^1 K_\delta(u,v) dudv -3 \\
&\le 24\alpha(1) - 24\alpha(1)^2 - 5 \\
&= 24\cdot\frac{\phi(K_\delta)+2}{6} - 24\left(\frac{\phi(K_\delta)+2}{6}\right)^2 - 5 \\
&= 1-\frac23 (1-\phi(K_\delta))^2.
\end{align*}
\end{proof}

We can now prove the same estimate for a general copula with arbitrary diagonal.

\begin{theorem}\label{thm:upper}
For any copula $C$ we have
$$ \rho(C) \leq 1-\frac23 (1-\phi(C))^2.$$
\end{theorem}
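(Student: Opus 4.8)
The plan is to reduce the general case to the diagonal-copula case already handled in Proposition~\ref{prop:y}. The key observation should be that among all copulas sharing a fixed diagonal $\delta$, the diagonal copula $K_\delta$ is the pointwise largest: indeed $K_\delta(u,v)=\min\{u,v,\tfrac12(\delta(u)+\delta(v))\}$ and any copula $C$ with diagonal $\delta$ satisfies $C(u,v)\le\min\{u,v\}$ trivially, and also $C(u,v)\le\tfrac12(\delta(u)+\delta(v))$ by a standard argument (e.g. the $2$-increasing property applied to the rectangle $[u,v]\times[u,v]$ when $u\le v$ together with $C(v,v)-C(u,v)-C(v,u)+C(u,u)\ge 0$ and $C(u,v)=C(v,u)$ for the symmetric reduction, or the well-known bound $C(u,v)\le\tfrac12(\delta(u)+\delta(v))$ valid for every copula). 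Since $\rho$ is monotone with respect to the pointwise order and depends on $C$ only through $\int\!\int C\,du\,dv$, and since $\phi$ depends only on the diagonal, we would get $\rho(C)\le\rho(K_\delta)$ while $\phi(C)=\phi(K_\delta)$; the bound in Proposition~\ref{prop:y}(i) then transfers directly.

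First I would reduce to a symmetric diagonal. By the same symmetrization trick used in the proof of Theorem~\ref{thm:main}, replacing $C$ with $\widetilde C=\tfrac14(C+C^t+\widehat C+\widehat C^t)$ leaves both $\rho$ and $\phi$ unchanged and makes the copula doubly symmetric, hence its diagonal is a symmetric diagonal satisfying \eqref{eq:delta-sym}. So without loss of generality $C$ is doubly symmetric with symmetric diagonal $\delta$. Then I would establish the pointwise inequality $C\le K_\delta$: the two bounds $C(u,v)\le u$, $C(u,v)\le v$ are immediate from the copula axioms, and the bound $C(u,v)\le\tfrac12(\delta(u)+\delta(v))$ follows because, assuming $u\le v$, the rectangle $[u,v]\times[u,v]$ has nonnegative $C$-volume, giving $C(u,v)+C(v,u)\le C(u,u)+C(v,v)=\delta(u)+\delta(v)$, and $C(u,v)=C(v,u)$ after the symmetrization, yielding $2C(u,v)\le\delta(u)+\delta(v)$. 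Taking the minimum of the three bounds gives $C\le K_\delta$, and by (C\ref{monotone}) (monotonicity of $\rho$) we obtain $\rho(C)\le\rho(K_\delta)$.

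The remaining gap is that Proposition~\ref{prop:y} assumes the smoothness condition \eqref{eq:delta-smooth}, namely that $\delta$ is differentiable with continuous derivative and $0<\delta'<2$ off a finite set, whereas a general symmetric diagonal is only increasing and $2$-Lipschitz. I would close this by an approximation argument: given an arbitrary symmetric diagonal $\delta$, construct a sequence of symmetric diagonals $\delta_n$ satisfying \eqref{eq:delta-smooth} with $\delta_n\to\delta$ uniformly (for instance by convolving with a smooth mollifier and adding a small multiple of the identity, while enforcing the symmetry \eqref{eq:delta-sym}). Uniform convergence $\delta_n\to\delta$ forces $K_{\delta_n}\to K_\delta$ uniformly, so by continuity (C5) both $\rho(K_{\delta_n})\to\rho(K_\delta)$ and $\phi(K_{\delta_n})\to\phi(K_\delta)$; passing to the limit in the inequality $\rho(K_{\delta_n})\le 1-\tfrac23(1-\phi(K_{\delta_n}))^2$ of Proposition~\ref{prop:y}(i) gives the same bound for $K_\delta$. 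Chaining this with $\rho(C)\le\rho(K_\delta)$ and $\phi(C)=\phi(K_\delta)$ (the latter because $\phi$ depends only on the shared diagonal) completes the proof.

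I expect the main obstacle to be the approximation step: one must verify that the mollified diagonals can be chosen to \emph{simultaneously} preserve the symmetry relation \eqref{eq:delta-sym}, remain valid diagonals (increasing, $2$-Lipschitz, with the right boundary values $\delta_n(0)=0$, $\delta_n(1)=1$), and satisfy the strict condition $0<\delta_n'<2$; the convolution naturally respects symmetry and the Lipschitz bound, and adding $\varepsilon_n(u-\delta(u))$-type corrections enforces strict positivity of the derivative at the endpoints, but the bookkeeping to guarantee all constraints at once is the delicate part. The monotonicity reduction $C\le K_\delta$ and the uniform-convergence argument are otherwise routine.
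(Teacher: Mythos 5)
Your proposal is correct in substance and follows the same strategic skeleton as the paper's proof: symmetrize $C$ to a doubly symmetric copula, dominate it pointwise by the diagonal copula $K_\delta$, invoke Proposition~\ref{prop:y}(i), and use an approximation argument to bridge the smoothness hypothesis \eqref{eq:delta-smooth}. The two execution differences are worth noting. First, where you prove the domination $\widetilde C\le K_\delta$ directly from the $2$-increasing property plus symmetry, the paper simply cites \cite[Theorem~2~(iv)]{NeQuRoUb}; your argument is a nice self-contained substitute, but your parenthetical fallback claim that $C(u,v)\le\tfrac12(\delta_C(u)+\delta_C(v))$ holds for \emph{every} copula is false. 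For the shuffle $M(3,(\tfrac13,\tfrac23),(2,3,1),(1,1,1))$ one has $C(\tfrac13,\tfrac23)=\tfrac13$ while $\tfrac12(\delta_C(\tfrac13)+\delta_C(\tfrac23))=\tfrac16$, so the symmetry produced by the symmetrization step is genuinely needed --- your main line of argument uses it correctly, so this is only a caution about the aside. Second, the approximation device differs: the paper approximates the \emph{copula} by its Bernstein copula $B_n^C$, whose diagonal is automatically a polynomial, so \eqref{eq:delta-smooth} holds with no further work, at the cost of the $12\varepsilon$/$6\varepsilon$ transfer estimates; you instead mollify the \emph{diagonal}, which keeps the clean exact chain $\rho(C)\le\rho(K_\delta)$, $\phi(C)=\phi(K_\delta)$ but shifts all the burden onto constructing valid approximating diagonals.

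On that construction, two remarks. Your diagnosis that the corrections are needed ``at the endpoints'' understates the issue: mollification alone fails in the \emph{interior} whenever $\delta$ has flat pieces or pieces of slope $2$ (e.g.\ $\delta_W$, or the diagonals in Examples~\ref{ex:1} and \ref{ex:tocke}, which are piecewise linear with slopes in $\{0,1,2\}$), since the mollified diagonal then still has derivative $0$ or $2$ on whole intervals, violating \eqref{eq:delta-smooth}. Fortunately the fix you propose --- replacing $\delta$ by $(1-\varepsilon)\delta+\varepsilon\,\id$, i.e.\ adding $\varepsilon(u-\delta(u))$ --- repairs this globally, not just at the endpoints, forcing $\varepsilon\le\delta_n'\le 2-\varepsilon$. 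The full bookkeeping can indeed be carried out: extend $\delta$ by $0$ to the left of $0$ and by $2u-1$ to the right of $1$ (this extension respects \eqref{eq:delta-sym}), convolve with a symmetric mollifier, subtract the constant $\delta_\eta(0)$ (the symmetry relation forces $\delta_\eta(1)-\delta_\eta(0)=1$, so both endpoint conditions come out right), verify $\delta_\eta-\delta_\eta(0)\le\id$ by pairing $\pm s$ in the convolution and using $2$-Lipschitzness together with \eqref{eq:delta-sym}, and then mix with the identity. Alternatively, applying the Bernstein polynomial operator to $\delta$ itself preserves monotonicity, the $2$-Lipschitz bound, the endpoint values, the inequality $\delta\le\id$, and the symmetry \eqref{eq:delta-sym}, and outputs a polynomial, which satisfies \eqref{eq:delta-smooth} automatically; this closes your gap with essentially no bookkeeping and is, in effect, what the paper's choice of Bernstein copulas accomplishes at the level of copulas.
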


\begin{proof}
If $C=M$ then $\rho(C)=\phi(C)=1$ and the estimate holds. So assume $C \neq M$ so that $\phi(C)<1$. 
Let $\varepsilon >0$ and $\varepsilon < \frac{1-\phi(C)}{6}$. By \cite[Theorem~4.1.11]{DuSe} there exists an integer $n$ such that the Bernstein copula
$$B_n^C(u,v)=\sum_{i,j=0}^n C\left(\frac{i}{n},\frac{j}{n}\right) \binom{n}{i}\binom{n}{j} u^i(1-u)^{n-i}v^j(1-v)^{n-j}$$
differs from $C(u,v)$ by less than $\varepsilon$ uniformly.
Similarly as in the proof of Theorem~\ref{thm:main} we can take $A^C_n=\frac14 \big(B_n^C+(B_n^C)^t+\widehat{(B_n^C)}+\widehat{(B_n^C)}^t\big)$.
Let $\delta_n(u)$ be the diagonal of copula $A^C_n$. Since the diagonal of $B^C_n$ is a polynomial in $u$, also $\delta_n(u)$ is a polynomial in $u$.
This implies that $\delta_n$ is differentiable with continuous derivative and $\delta_n'(u)=0$ or $\delta_n'(u)=2$ for at most finitely many points $u \in \II$.
Since $A^C_n$ is a doubly symmetric copula, $\delta_n$ is a symmetric diagonal.
Furthermore, it follows from \cite[Theorem~2 (iv)]{NeQuRoUb} that $A^C_n$ is bounded from above by the diagonal copula $K_{\delta_n}$. This together with Proposition~\ref{prop:y} (i) implies that
$$\rho(A^C_n) \le \rho(K_{\delta_n}) \le 1-\frac23 (1-\phi(K_{\delta_n}))^2=1-\frac23 (1-\phi(A^C_n))^2.$$
Similarly as in the proof of Theorem~\ref{thm:main} we estimate
$$
\begin{aligned}
\rho(C) \leq \rho(B^C_n)+12\varepsilon =\rho(A^C_n)+12\varepsilon  \leq 1-\frac23 (1-\phi(A^C_n))^2+12\varepsilon = 1-\frac23 (1-\phi(B^C_n))^2+12\varepsilon.
\end{aligned}
$$
Furthermore, $\phi(B^C_n) \leq \phi(C)+6\varepsilon<1$ by our assumption for $\varepsilon$, hence
$$\rho(C) \leq 1-\frac23 (1-\phi(C)-6\varepsilon)^2+12\varepsilon.$$
By sending $\varepsilon$ to $0$ we obtain the desired estimate.
\end{proof}

Next example shows that for certain values of $\phi(C)$ the upper bound given in Theorem~\ref{thm:upper} is attained.

Let $\{(a_k, b_k), k=1,2, \ldots, n\}$ be a finite family of disjoint open subintervals of $\II$ and $\{B_k, k=1,2, \ldots, n\}$ a family of copulas. Then the \emph{ordinal sum} $B$ of $\{B_k, k=1,2, \ldots, n\}$ with respect to $\{(a_k, b_k), k=1,2, \ldots, n\}$ is a copula defined by 
$$
B(u, v) = \begin{cases}
		a_k + (b_k-a_k)B_k(\frac{u-a_k}{b_k-a_k}, \frac{v-a_k}{b_k-a_k}); & (u, v) \in [a_k, b_k]^2, k=1, 2, \dots, n,\\
			\min\{u,v\}; & \text{otherwise},
    \end{cases}
$$
(see \cite[Section 3.2.2]{Nels}). The Spearman's rho of the ordinal sum $B$ equals
$$ \rho(B) = 1 - \sum_{k=1}^n (b_k-a_k)^3(1-\rho(B_k)). $$

\begin{example} \label{ex:tocke}
Let $n$ be a positive integer and let $C_n$ be a shuffle of $M$
$$ C_n = \textstyle M(2n, (\frac{1}{2n}, \frac{2}{2n}, \dots, \frac{2n-1}{2n}), (2, 1, 4, 3, \dots, 2n, 2n-1), (1, 1, \dots, 1)).$$
The scatterplot and 3D graph of copula $C_3$ is shown in Figure \ref{fig:ex11}.  
\begin{figure}[!ht]
    \centering
    \includegraphics{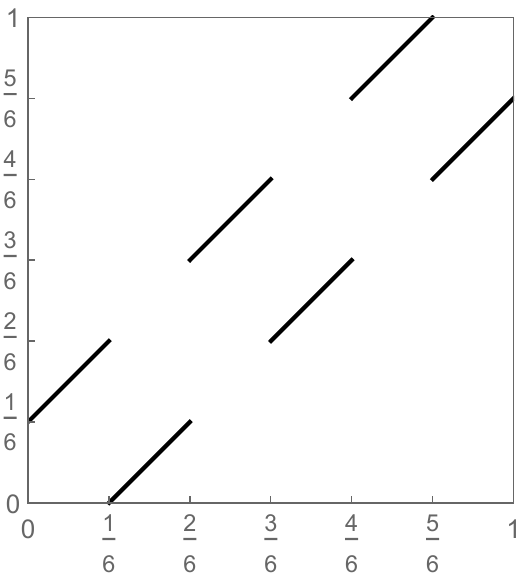} \ \ \ \
    \includegraphics[height=6cm]{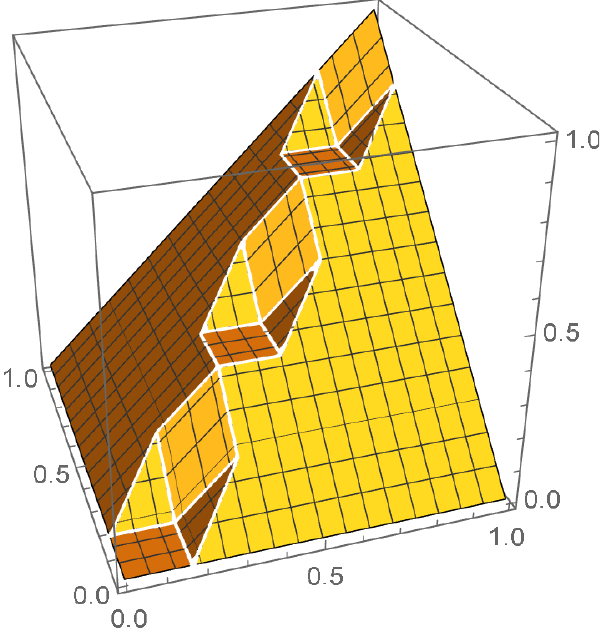}
    \caption{The mass distribution (left) and the 3D graph (right) of the copula $C_3$ from  Example~\ref{ex:tocke}.}
    \label{fig:ex11}
\end{figure}
We have
$$ \delta_{C_n}(u) = \begin{cases}
			\frac{2k-2}{2n}; & u \in [\frac{2k-2}{2n}, \frac{2k-1}{2n}], k=1, 2, \dots, n,\\
			2u-\frac{2k}{2n}; & u \in [\frac{2k-1}{2n}, \frac{2k}{2n}] , k=1, 2, \dots, n,
    \end{cases} $$
so that 
$$ \phi(C_n) = 1 - \frac{3}{2n}.$$
The copula $C_n$ is an ordinal sum of $n$ copies of the copula $C_1$, each of them is of the size $\frac{1}{n}$. Since
$$ C_1(u, v) = \begin{cases}
            0; & 0 \le u \le \frac12, 0 \le v \le \frac12, \\
            u+v-1; & \frac12 \le u \le 1, \frac12 \le v \le 1, \\
            v-\frac12; & 0 \le u \le \frac12, \frac12 \le v \le u + \frac12, \\
            u; & \frac12 \le u \le 1, u + \frac12 \le v \le 1, \\
            u-\frac12; & 0 \le v \le \frac12, \frac12 \le u \le v + \frac12, \\
            v; & 0 \le v \le \frac12, v + \frac12 \le u \le 1, 
            \end{cases}$$
we have $\rho(C_1) = -\frac{1}{2} $ 
and  
$$ \rho(C_n) = 1- n \cdot \frac{1-\rho(C_1)}{n^3} = 1 - \frac{3}{2n^2}.$$
Notice that $\rho(C_n) = 1-\frac23 (1-\phi(C_n))^2$, so the point $(\phi(C_n), \rho(C_n))$ lies on the curve $r=1-\frac23 (1-f)^2$.  \end{example}

In next proposition we show that there is no copula $C$, such that 
$\phi(C) =0$ and the point $(\phi(C), \rho(C))$ lies on the curve $r=1-\frac23 (1-f)^2$. A similar result could be obtained also for some other values of $\phi(C)$.

\begin{proposition}
Suppose that $\phi(C) =0$ for some copula $C \in \CC$. Then $\rho(C) < \frac13$.
\end{proposition}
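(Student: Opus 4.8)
The plan is to show that the bound of Theorem~\ref{thm:upper}, which at $\phi(C)=0$ reads $\rho(C)\le\frac13$, is \emph{strict}, by analysing when equality can occur. First I would reduce to a convenient normal form: replacing $C$ by $\widetilde C=\frac14(C+C^t+\widehat C+\widehat C^{\,t})$ as in the proof of Theorem~\ref{thm:main} changes neither $\rho$ nor $\phi$, so I may assume $C$ is doubly symmetric. Write $\delta=\delta_C$ for its symmetric diagonal, so that $\phi(K_\delta)=\phi(C)=0$, i.e.\ $\alpha(1)=\int_0^1\delta=\frac13$. Since $C\le K_\delta$ by \cite[Theorem~2~(iv)]{NeQuRoUb} and $\rho$ is monotone, it suffices to prove the strict inequality $\rho(K_\delta)<\frac13$.

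To do this I would revisit the single inequality in the proof of Proposition~\ref{prop:y}: the application of Jensen's inequality in part (h) with the \emph{strictly} convex function $x\mapsto x^2$ and $s(u)=\delta^{-1}(1-u)+\delta^{-1}(u)$. Strict convexity means that $\rho(K_\delta)=\frac13$ forces $s$ to be constant almost everywhere, and integrating (using Proposition~\ref{prop:y}(d)) pins the constant to $s\equiv 2(1-\alpha(1))=\frac43$. I would then derive a contradiction from this single identity together with the symmetry \eqref{eq:delta-sym}. Evaluating at $u=\frac12$ gives $2\,\delta^{-1}(\tfrac12)=\frac43$, hence $\delta^{-1}(\tfrac12)=\frac23$, i.e.\ $\delta(\tfrac23)=\frac12$. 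On the other hand, since $\delta$ is continuous with $\delta(u)\le u<1$ for $u<1$, we have $\delta^{-1}(1^-)=1$; letting $u\to0^+$ in $s\equiv\frac43$ then yields $\delta^{-1}(0^+)=\frac13$, so $\delta\equiv0$ on $[0,\frac13]$, and \eqref{eq:delta-sym} forces $\delta(u)=2u-1$ on $[\frac23,1]$, whence $\delta(\tfrac23)=\frac13$. The two values $\frac12$ and $\frac13$ for $\delta(\tfrac23)$ are incompatible, so equality is impossible and $\rho(K_\delta)<\frac13$, giving $\rho(C)<\frac13$.

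The main obstacle is that Proposition~\ref{prop:y} is proved under the smoothness hypothesis \eqref{eq:delta-smooth}, whereas the diagonal $\delta$ of a general doubly symmetric copula need not be strictly increasing or everywhere differentiable; thus $\delta^{-1}$ must be read as a generalized inverse, and the endpoint values $\delta^{-1}(0^+),\delta^{-1}(1^-)$ used above need justification (flat pieces of $\delta$ become jumps of $\delta^{-1}$). I expect the cleanest remedy is to re-establish the key identity of Proposition~\ref{prop:y}(h), namely $\int_{\II^2}K_\delta=\frac{11}{6}-2\alpha(1)-\frac12\int_0^1 s(u)^2\,du$, directly for an arbitrary $2$-Lipschitz symmetric diagonal: the auxiliary identities (b), (d), (e) survive by the standard area relation between the integral of a monotone function and that of its generalized inverse, and only the integrations by parts producing (f) and (g) must be redone without differentiating $h$. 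Alternatively, one can run the Bernstein approximation of Theorem~\ref{thm:upper} to obtain smooth diagonals $\delta_n\to\delta$ and upgrade the passage to the limit to a strict inequality, by observing that the Jensen defect equals $\frac12\,\mathrm{Var}\big(\delta_n^{-1}(1-u)+\delta_n^{-1}(u)\big)$, which is weakly lower semicontinuous, together with the fact — established exactly as in the previous paragraph — that the limiting sum $\delta^{-1}(1-u)+\delta^{-1}(u)$ cannot be constant when $\alpha(1)=\frac13$.
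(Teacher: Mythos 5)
Your core idea is sound, and it is a genuinely different route from the paper's. You prove strictness qualitatively, via the equality case of the Jensen step in Proposition~\ref{prop:y}(h): equality would force $s(u)=\delta^{-1}(u)+\delta^{-1}(1-u)$ to be a.e.\ constant, the constant is pinned to $\frac43$ by Proposition~\ref{prop:y}(d), and this is incompatible with the symmetry \eqref{eq:delta-sym} and $\alpha(1)=\frac13$. The paper instead makes the Jensen defect \emph{quantitative and uniform}: it shows that $\alpha(1)=\frac13$ forces $\delta(\frac13-\frac{1}{100})\ge\frac{1}{50}$, hence $\frac43-s(u)\ge\frac1{100}$ on a set of measure at least $\frac1{50}$, which yields the explicit bound $\rho(K_\delta)\le\frac13-\frac{12}{1,000,000}$ for every admissible smooth diagonal. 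That uniformity is precisely what makes the final Bernstein-approximation step painless: a gap independent of $\delta$ survives the limit, whereas a strict inequality for each smooth diagonal separately would not. You correctly identify this as the main obstacle, and your second remedy does repair it: since $\delta_n\to\delta$ uniformly, the generalized inverses converge at every continuity point of $\delta^{-1}$, hence a.e., and bounded convergence gives $\mathrm{Var}(s_n)\to\mathrm{Var}(s)$ (full convergence, not merely lower semicontinuity), so in the limit $\rho(C)\le\frac13-6\,\mathrm{Var}(s)<\frac13$ once $s$ is shown not to be a.e.\ constant for the (possibly non-smooth) limiting diagonal. In exchange for the extra measure-theoretic care, your argument is conceptually cleaner and, combined with compactness of $\{C\in\CC:\phi(C)=0\}$ and continuity of $\rho$, even recovers an abstract uniform gap; the paper's argument buys an explicit numerical gap with no generalized-inverse technicalities.

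One small repair is needed in your contradiction: an a.e.\ identity cannot be evaluated at the single point $u=\frac12$. The clean fix avoids that point entirely: your limit $u\to0^+$ argument (which is legitimate, using monotonicity of $\delta^{-1}$ and limits along continuity points) gives $\delta\equiv0$ on $[0,\frac13]$, and then \eqref{eq:delta-sym} forces $\delta(u)=2u-1$ on $[\frac23,1]$, so $\delta^{-1}(t)=\frac{1+t}{2}$ for all $t>\frac13$; consequently $s(t)=\frac{1+t}{2}+\frac{2-t}{2}=\frac32\neq\frac43$ on the whole interval $(\frac13,\frac12)$, a set of positive measure, which contradicts $s=\frac43$ a.e.\ without any pointwise evaluation.
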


\begin{proof}
We will actually prove that $\rho(C) \le \frac13 - \frac{12}{1,000,000}$.
Suppose that $\delta$ is a symmetric differentiable diagonal with continuous derivative that satisfies \eqref{eq:delta-smooth}, $\alpha, g, h:\II \to \II$ auxiliary functions defined by \eqref{eq:aux}, and $K_\delta$ diagonal copula.
Furthermore suppose that $\phi(K_\delta) = 0$, so that $\alpha(1) = \frac13$.
Let $\varepsilon = \frac{1}{100}$ and denote by $b = \delta(\frac13 - \varepsilon)$. We will first show that $b \ge \frac{1}{50}$. To this end we may assume that $b\le\frac16-2\varepsilon$.

Since $\delta$ is increasing, we have $\delta(u) \le b$ for any $u \le \frac13 - \varepsilon$, and since it is 2-Lipshitz we have $\delta(u) \le 2u- \frac23 + 2\varepsilon + b$ for any $u \ge \frac13 - \varepsilon$. Since $\delta$ is a symmetric diagonal we have $\delta(\frac23 + \varepsilon) = b+\frac13+ 2\varepsilon$.
We can derive similar estimates as above using the point $\frac23+\varepsilon$. It follows that for any $u \in \II$
$$ \delta(u) \le \delta_1(u) = \min\bigl\{u, \max\{b, 2u- \frac23 + 2\varepsilon + b\},\max\{b+\frac13+ 2\varepsilon, 2u+b-1\}\big\} . $$
Since $b\leq \frac16-2\varepsilon$, we have
$$ \alpha(1) = \frac 13 \le \int_0^1 \delta_1(u)du = \frac{1}{36}(11 + 36b -36b^2 + 24\varepsilon + 72\varepsilon^2),$$
hence 
$$ b \ge \frac12 - \frac{1}{3}\sqrt{2 + 6\varepsilon + 18\varepsilon^2} \approx 0.0214 \ge \frac{1}{50}.$$ 

For any $u\le \frac{1}{50}$ we now have
$$ \textstyle\delta^{-1}(u) \le \delta^{-1}(\frac{1}{50}) \le \delta^{-1}(b) = \frac13 - \varepsilon, $$
 and since $\delta^{-1}(1-u) \le 1$ it follows that 
$$ \textstyle\frac43 - \delta^{-1}(u) - \delta^{-1}(1-u) \ge \varepsilon. $$
Thus 
$$ \int_0^1 \left(\frac43 - \delta^{-1}(u) - \delta^{-1}(1-u)\right)^2 du \ge \int_0^{1/50} \left(\frac43 - \delta^{-1}(u) - \delta^{-1}(1-u)\right)^2 du \ge \frac{1}{50}\varepsilon^2 .$$
It follows that
\begin{align*}
 \int_0^1 \left(\delta^{-1}(u) + \delta^{-1}(1-u)\right)^2 du &= \int_0^1 \left(\frac43 - \delta^{-1}(u) - \delta^{-1}(1-u)\right)^2 du \\
 & \ \ \ \ \ + \frac83\int_0^1 \left(\delta^{-1}(u) + \delta^{-1}(1-u)\right) du - \frac{16}{9} \\
 &\ge \frac{1}{50}\varepsilon^2 + \frac{16}{3}(1-\alpha(1)) - \frac{16}{9} \\
 &= \frac{1}{50}\varepsilon^2 + \frac{16}{9} 
\end{align*}
by Proposition \ref{prop:y} (d). It was proven in Proposition \ref{prop:y} (h) that
$$ I = \int_0^1 \int_0^1 K_\delta(u,v) dudv = \frac{11}{6} - 2\alpha(1) - \frac12\int_0^1 (\delta^{-1}(1-u)+\delta^{-1}(u))^2 du ,$$
so we have
$$ I \le \frac{11}{6} - \frac23 - \frac12\left(\frac{1}{50}\varepsilon^2 + \frac{16}{9}\right) = \frac{5}{18} - \frac{1}{100}\varepsilon^2 .$$
It follows that 
$$ \rho(K_\delta) = 12 I -3 \le \frac13 - \frac{12}{100}\varepsilon^2 = \frac13 -  \frac{12}{1,\!000,\!000}. $$

Finally, a similar argument as in the proof of Theorem \ref{thm:upper} shows that for any copula with $\phi(C) = 0$ we have $\rho(C) \le \frac13 -  \frac{12}{1,000,000}. $
\end{proof}

Nevertheless, for any value of $\phi(C)$ we can come close to the upper bound proved in Theorem~\ref{thm:upper} as the next example and proposition demonstrate.

\begin{example} \label{ex:10}
Let $a \in [\frac14, \frac12]$ and let $\delta_a$ be a diagonal
$$ \delta_a(u) = \begin{cases}
			0; & u \le a, \\
			u-a; & a \le u \le 1-a, \\
			2u-1; & 1-a \le u \le 1.
    \end{cases} $$
The diagonal copula belonging to $\delta_a$ is
$$ K_{\delta_a}(u, v) = \begin{cases}
            0; & 0 \le u \le a, 0 \le v \le a, \\
            \frac{v-a}{2}; & a \le v \le 1-a, \frac{v-a}{2} \le u \le a, \\
            \frac{u-a}{2}; & a \le u \le 1-a, \frac{u-a}{2} \le v \le a, \\
            v-\frac12; & \frac12-a \le u \le a, 1-a \le v \le u+\frac12, \\
            \frac{u+v}{2}-a; & a \le u \le 1-a, a \le v \le 1-a, \\
            u-\frac12; & \frac12-a \le v \le a, 1-a \le u \le v+\frac12, \\
            \frac{u+2v-a-1}{2}; & a \le u \le 1-a, 1-a \le v \le \frac{u+a+1}{2}, \\
            \frac{2u+v-a-1}{2}; & a \le v \le 1-a, 1-a \le u \le \frac{v+a+1}{2}, \\
            u+v-1; & 1-a \le u \le 1, 1-a \le v \le 1, \\
            u; & \min\{2u+a, u+\frac12, \frac{u+a+1}{2}\} \le v \le 1 \\
            v; & \min\{2v+a, v+\frac12, \frac{v+a+1}{2}\} \le u \le 1. 
            \end{cases}$$
The scatterplot and the 3D graph of copula $K_{\delta_a}$ is shown in Figure \ref{fig:ex10}. 
\begin{figure}[!ht]
    \centering
    \includegraphics{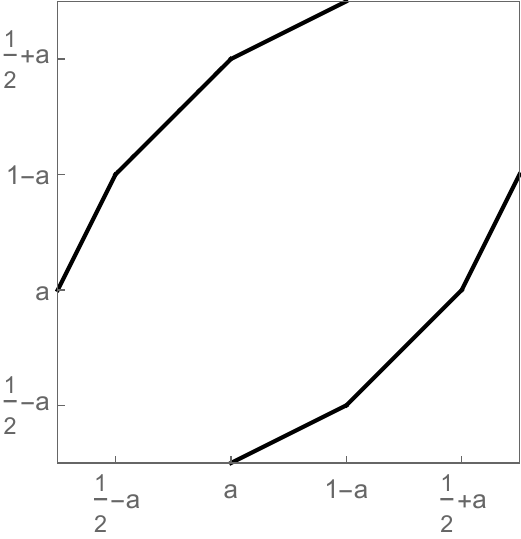} \ \ \ \
    \includegraphics[height=6cm]{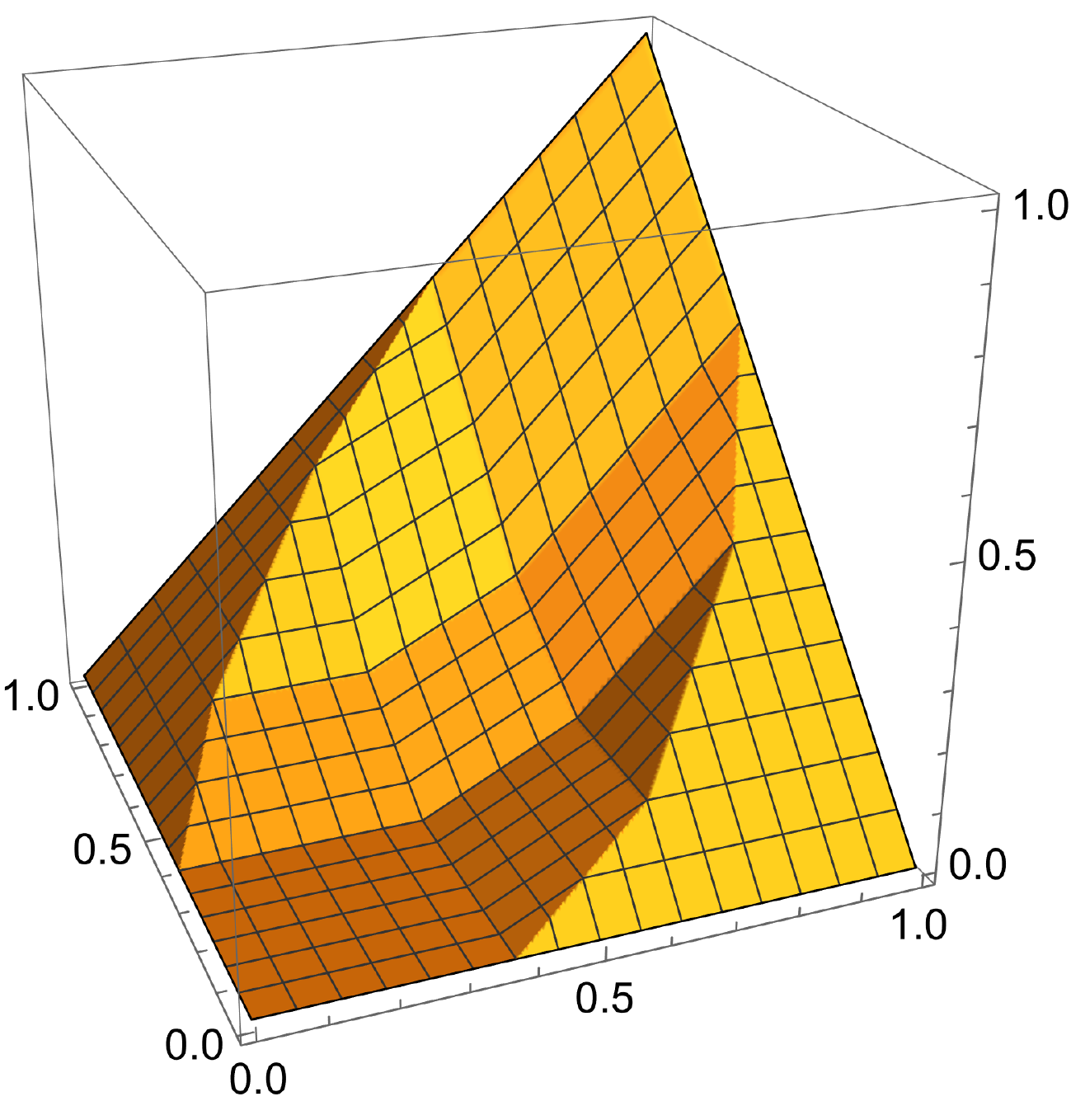}
    \caption{The mass distribution (left) and the 3D graph (right) of the copula $K_{\delta_a}$ from Example~\ref{ex:10}.}
    \label{fig:ex10}
\end{figure}

It follows that
$$ \phi(K_{\delta_a}) = 6a^2 - 6a + 1  
\text{ \ \ and \ \ }  \rho(K_{\delta_a}) = 8a^3 - 6a + \tfrac32,$$
so that 
$$\rho(K_{\delta_a}) = - \frac12 + (1 + 2\phi(K_{\delta_a})) -  \frac{\sqrt{3}}{9}(1+2\phi(K_{\delta_a}))^{3/2}.$$ 
Note that $\phi(K_{\delta_a}) \in [-\frac12,-\frac18]$ for $a \in [\frac14, \frac12]$. The point $(\phi(K_{\delta_a}), \rho(K_{\delta_a}))$ lies strictly below the curve $r=1-\frac23 (1-f)^2$ for any $a \in [\frac14, \frac12)$. 
\end{example}

Let $r: [-\frac12, 1] \to [-1, 1]$ be a function defined by
\begin{equation}\label{eq:r}
    r(x) =  \begin{cases}
            2x + \frac12 -  \frac{\sqrt{3}}{9}(1+2x)^{3/2}; & x \in [-\frac12, -\frac18], \\
            \frac43 x + \frac{7}{24}; & x \in [-\frac18, \frac14], \\
            {\displaystyle\frac{2n+1}{n^2+n}x + \frac{2n^2-2n+1}{2(n^2+n)}}; & x \in [1-\frac{3}{2n}, 1-\frac{3}{2(n+1)}] \text{ for } n= 2, 3, \dots, \\
            1; & x = 1. 
            \end{cases}
\end{equation}

\begin{proposition}\label{prop:dosezeno}
For any point $(x, r(x))$ on the graph of function $r$ there exist a copula $C$, such that
$\phi(C) = x$ and $\rho(C) = r(x)$.
\end{proposition}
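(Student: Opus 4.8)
The plan is to exploit that both $\phi$ and $\rho$ are \emph{affine} functionals on the convex set $\CC$. From \eqref{rho} and \eqref{phi} the maps $C \mapsto \rho(C)$ and $C \mapsto \phi(C)$ are integrals depending linearly on $C$, so for copulas $C_1,C_2$ and $\lambda \in [0,1]$ the convex combination $C_\lambda = \lambda C_1 + (1-\lambda)C_2$ is again a copula with $\phi(C_\lambda) = \lambda\phi(C_1) + (1-\lambda)\phi(C_2)$ and $\rho(C_\lambda) = \lambda\rho(C_1) + (1-\lambda)\rho(C_2)$. Writing $\Phi(C) = (\phi(C),\rho(C))$, this says the attainable region $\Phi(\CC) \subseteq \RR^2$ is convex, and that $\Phi(C_\lambda)$ sweeps out the whole segment joining $\Phi(C_1)$ and $\Phi(C_2)$ as $\lambda$ runs over $[0,1]$. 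The entire argument then reduces to checking that the graph of $r$ lies in $\Phi(\CC)$, and for this it suffices to place the curved branch and a discrete set of vertices inside $\Phi(\CC)$.

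First I would treat the curved branch $x \in [-\tfrac12,-\tfrac18]$ using Example~\ref{ex:10} directly. The map $a \mapsto \phi(K_{\delta_a}) = 6a^2-6a+1$ is continuous and, since its derivative $12a-6$ is nonpositive on $[\tfrac14,\tfrac12]$, strictly decreasing there; hence it is a bijection of $[\tfrac14,\tfrac12]$ onto $[-\tfrac12,-\tfrac18]$. For the $a$ corresponding to a given $x$ one reads off $\rho(K_{\delta_a}) = 2x + \tfrac12 - \tfrac{\sqrt3}{9}(1+2x)^{3/2} = r(x)$, so every point of the first branch is realized.

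Next I would realize the piecewise-linear branches as convex combinations of the extremal copulas already exhibited. The vertices are attainable: Example~\ref{ex:10} gives $\Phi(K_{\delta_{1/4}}) = (-\tfrac18,\tfrac18)$, Example~\ref{ex:tocke} gives $\Phi(C_n) = (1-\tfrac{3}{2n},\,1-\tfrac{3}{2n^2})$ for $n \ge 2$, and $\Phi(M)=(1,1)$. A short computation shows that the line through $\Phi(K_{\delta_{1/4}})$ and $\Phi(C_2)$ is precisely $\tfrac43 x + \tfrac{7}{24}$ on $[-\tfrac18,\tfrac14]$, and that the line through $\Phi(C_n)$ and $\Phi(C_{n+1})$ has slope $\tfrac{2n+1}{n^2+n}$ and agrees with the third branch of $r$ on $[1-\tfrac{3}{2n},\,1-\tfrac{3}{2(n+1)}]$. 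Thus on $[-\tfrac18,\tfrac14]$ the combinations $\lambda K_{\delta_{1/4}} + (1-\lambda)C_2$ realize every $(x,r(x))$, on each tail interval the combinations $\lambda C_n + (1-\lambda)C_{n+1}$ do the same, and $x=1$ is realized by $M$. Since these intervals together with $[-\tfrac12,-\tfrac18]$ exhaust $[-\tfrac12,1]$, every point of the graph is attained.

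The argument is essentially routine once the affinity observation is in place: the only bookkeeping is matching the slopes and intercepts in \eqref{eq:r} against the interpolants through the vertices $\Phi(C_n)$ and $\Phi(K_{\delta_{1/4}})$, and checking that the listed intervals cover $[-\tfrac12,1]$. I do not expect a genuine obstacle here; the conceptual content is simply that $\Phi(\CC)$ is convex and already contains both the curved branch and all the vertices, whence it contains every chord between consecutive vertices.
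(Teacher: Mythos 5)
Your proposal is correct and follows essentially the same route as the paper: the curved branch is realized by the copulas $K_{\delta_a}$ of Example~\ref{ex:10}, the vertices by the shuffles $C_n$ of Example~\ref{ex:tocke}, and the linear pieces by convex combinations $\lambda K_{\delta_{1/4}}+(1-\lambda)C_2$ and $\lambda C_n+(1-\lambda)C_{n+1}$, using that $\phi$ and $\rho$ are affine on $\CC$. Your write-up is somewhat more explicit than the paper's (verifying the bijection $a\mapsto\phi(K_{\delta_a})$, the slopes and intercepts, and the endpoint $x=1$ via $M$), but the underlying argument is identical.
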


\begin{proof}
Example \ref{ex:10} shows that for $x \in [-\frac12, -\frac18]$ any point on the graph  of function $r$  is attained by copula $K_{\delta_a}$. Note that $r(1-\frac{3}{2n}) = 1-\frac{3}{2n^2}$, so for $x = 1-\frac{3}{2n}$ the point on the graph  of function $r$ is attained by copula $C_n$ from Example \ref{ex:tocke}. In the interval $[-\frac18, \frac14]$ the points on the graph  of function $r$ are attained by  convex combinations of copulas $K_{\delta_{1/4}}$ and $C_2$ and in the intervals $[1-\frac{3}{2n}, 1-\frac{3}{2(n+1)}]$ by  convex combinations of copulas $C_n$ and $C_{n+1}$.
\end{proof}

\section{The exact region determined by $\phi$ and $\rho$}\label{sec:region}

We can now collect our findings in the following theorem.

\begin{theorem}
The exact region determined by Spearman's rho and Spearman's footrule of all points 
$\{(\phi(C), \rho(C)) \in [-\frac12, 1] \times [-1,1]; C \in \CC\}$ is given by

$$ \frac29\sqrt{3}(1+2\phi(C))^{3/2}-1 \le \rho(C) \le s(\phi(C))$$
where $s: [-\frac12, 1] \to [-1, 1]$ is a concave function satisfying
$$ r(x) \le s(x) \le 1-\frac23 (1-x)^2$$
and $r$ is the function defined by \eqref{eq:r}. 
\end{theorem}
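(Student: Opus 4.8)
The plan is to observe that the whole statement is essentially an assembly of the results already proved, the one conceptual ingredient being that the attainable region is \emph{convex}. Both $\phi$ and $\rho$ are affine functionals on $\CC$: indeed $\rho(C)=12\int_{\II^2}C\,dudv-3$ and $\phi(C)=6\int_0^1 C(u,u)\,du-2$ are each of the form ``constant plus a linear integral'', so they preserve convex combinations (as already used in the proof of Theorem~\ref{thm:main}). Hence the map $C\mapsto(\phi(C),\rho(C))$ is a continuous affine map, and since $\CC$ is compact in the sup norm and both functionals are continuous (axiom (C5), satisfied by $\phi$ and $\rho$ alike), the image $\Omega=\{(\phi(C),\rho(C)):C\in\CC\}$ is a compact convex subset of $\RR^2$.

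First I would define $s(x)=\max\{\rho(C):C\in\CC,\ \phi(C)=x\}$ for $x\in[-\tfrac12,1]$. Because $\Omega$ is compact and convex, each vertical slice $\{\rho:(x,\rho)\in\Omega\}$ is a closed interval, so the maximum is attained and $s$ is well defined; as the upper boundary of a convex set, $s$ is automatically concave. By Theorem~\ref{thm:main} every $(\phi(C),\rho(C))$ satisfies $\tfrac29\sqrt3(1+2\phi(C))^{3/2}-1\le\rho(C)$, and by Example~\ref{ex:1} this lower curve is attained for every value $\phi\in[-\tfrac12,1]$; in particular the projection of $\Omega$ onto the first coordinate is exactly $[-\tfrac12,1]$, and the lower slice endpoint is attained for each $x$. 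A direct computation (second derivative positive) confirms the lower curve is convex, as it must be for the lower boundary of a convex region.

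Next I would pin down the sandwich for $s$. The upper inequality $s(x)\le 1-\tfrac23(1-x)^2$ is immediate from Theorem~\ref{thm:upper}, which bounds every point of $\Omega$. For the lower inequality $r(x)\le s(x)$ I would invoke Proposition~\ref{prop:dosezeno}, which produces for every $x$ a copula with $\phi=x$ and $\rho=r(x)$; thus $(x,r(x))\in\Omega$ and hence $s(x)\ge r(x)$. To see that the two displayed inequalities describe the full region rather than just its boundary, take any $(x,y)$ with $\tfrac29\sqrt3(1+2x)^{3/2}-1\le y\le s(x)$: both endpoints $\big(x,\tfrac29\sqrt3(1+2x)^{3/2}-1\big)$ and $(x,s(x))$ lie in $\Omega$ (the former by Example~\ref{ex:1}, the latter by the definition of $s$), so convexity of $\Omega$ places the entire vertical segment, and in particular $(x,y)$, inside $\Omega$. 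The reverse containment is just the two bounds themselves.

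I expect no genuinely hard analytic step to remain at this stage, since all the delicate estimates live in Theorems~\ref{thm:main} and~\ref{thm:upper} and in Proposition~\ref{prop:dosezeno}. The main point to get right is the convexity of $\Omega$ together with the attendant bookkeeping: confirming that the affineness of $\phi$ and $\rho$ really makes the image convex, that compactness makes the supremum defining $s$ a maximum, and that concavity of $s$ is inherited from the convex image. Everything else is routine combination of the earlier results.
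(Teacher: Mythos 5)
Your proof is correct and follows essentially the same route as the paper: the paper's own proof simply cites Theorem~\ref{thm:main}, Theorem~\ref{thm:upper}, and Proposition~\ref{prop:dosezeno} and notes that $s$ is concave because the region is convex. The only difference is that you spell out why the region is convex (affineness of $\phi$ and $\rho$ on the convex compact set $\CC$) and why the supremum defining $s$ is attained, details the paper leaves implicit.
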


\begin{proof}
The assertion follows directly from Theorem \ref{thm:main}, Theorem \ref{thm:upper}, and Proposition \ref{prop:dosezeno}. The function $s$ is concave since the exact region is convex. 
\end{proof}

Note that the role of $\phi$ and $\rho$ can be exchanged, so from the theorem one can derive the exact upper bound for $\phi(C)$ in terms of $\rho(C)$ and a tight estimate for the lower bound.

Figure \ref{fig:fi-ro} shows the exact region determined by Spearman's rho and Spearman's footrule. The graph of function $r(x)$ is shown full, and the graph of function $1-\frac23 (1-x)^2$ is dashed.

\begin{figure}[!ht]
    \centering
    \includegraphics[height=8cm]{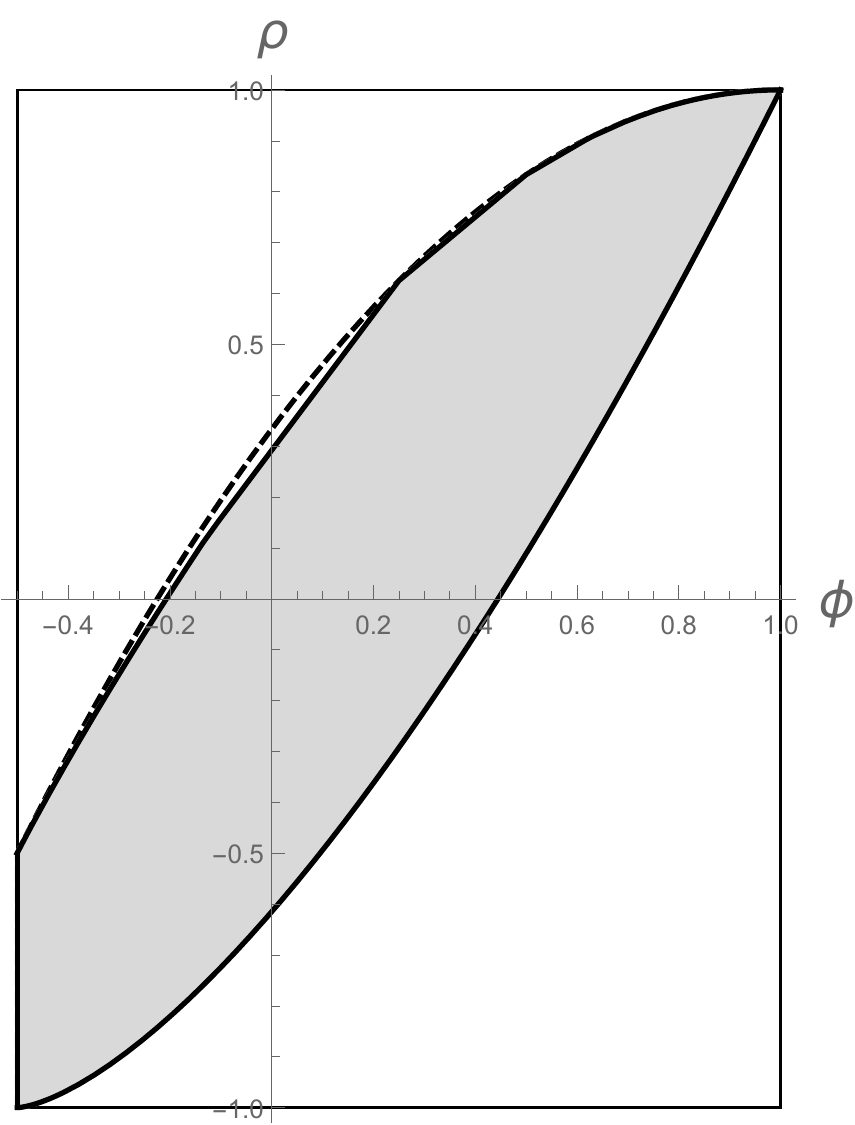} 
    \caption{The exact region determined by Spearman's rho and Spearman's footrule.}
    \label{fig:fi-ro}
\end{figure}

In paper \cite{KoBuMo} the authors introduce the \emph{$(\kappa_1, \kappa_2)$-similarity measure} between (week) concordance measures $\kappa_1$ and $\kappa_2$ as
$$ \kappa sm(\kappa_1, \kappa_2) = 1 - \frac{A(\kappa_1, \kappa_2)}{(1-\kappa_1(W))(1-\kappa_2(W))},$$ 
where $A(\kappa_1, \kappa_2)$ is the area of the exact region determined by $\kappa_1$ and $\kappa_2$. Our last proposition estimate $\kappa sm(\phi, \rho)$.

\begin{proposition}\label{prop:ksm}
The $(\phi,\rho)$-similarity measure between Spearman's footrule and Spearman's rho  satisfies
$$ 0.65 = \frac{13}{20} \le \kappa sm(\phi, \rho) \le \frac{121}{64} - \frac{\pi^2}{8} \approx 0.6569 .$$
\end{proposition}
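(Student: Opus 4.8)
The plan is to turn the similarity measure directly into an area computation and then sandwich that area between two explicit integrals. First I would record the normalizing constant: since $\rho$ is a genuine concordance measure we have $\rho(W)=-1$ by (C7), while $\phi(W)=-\frac12$ is the left endpoint of the range of the footrule, so
$$(1-\phi(W))(1-\rho(W))=\tfrac32\cdot 2=3,$$
and hence $\kappa sm(\phi,\rho)=1-\frac{A}{3}$, where $A$ is the area of the exact region. Writing $L(x)=\frac29\sqrt3(1+2x)^{3/2}-1$ for the lower boundary (Theorem~\ref{thm:main}) and $U(x)=1-\frac23(1-x)^2$ for the upper estimate (Theorem~\ref{thm:upper}), the region is $\{(x,y):-\frac12\le x\le 1,\ L(x)\le y\le s(x)\}$, so $A=\int_{-1/2}^1(s(x)-L(x))\,dx$. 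The bounds $r(x)\le s(x)\le U(x)$ (from Proposition~\ref{prop:dosezeno} and Theorem~\ref{thm:upper}) then give
$$\int_{-1/2}^1\bigl(r(x)-L(x)\bigr)\,dx\ \le\ A\ \le\ \int_{-1/2}^1\bigl(U(x)-L(x)\bigr)\,dx,$$
and since $\kappa sm=1-A/3$ is decreasing in $A$, the upper integral yields the lower bound for $\kappa sm$ and the lower integral the upper bound.

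For the lower bound $\frac{13}{20}$ I would simply evaluate the two elementary integrals. The substitution $s=1+2x$ gives $\int_{-1/2}^1 L=\frac65-\frac32=-\frac3{10}$, while $\int_{-1/2}^1 U=\frac34$, so $\int_{-1/2}^1(U-L)=\frac{21}{20}$ and $\kappa sm\ge 1-\frac13\cdot\frac{21}{20}=\frac{13}{20}$.

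The upper bound is where the work lies. I would split $[-\frac12,1]$ along the three branches of $r$ in \eqref{eq:r}. The pieces over $[-\frac12,-\frac18]$ and $[-\frac18,\frac14]$ are elementary (a polynomial plus the same $(1+2x)^{3/2}$ antiderivative as before, and a single linear piece). The crucial observation for the infinite family of linear pieces is that the nodes $P_n=(\phi(C_n),\rho(C_n))=(1-\frac{3}{2n},\,1-\frac{3}{2n^2})$ from Example~\ref{ex:tocke} satisfy $1-\frac{3}{2n^2}=1-\frac23\bigl(1-(1-\frac{3}{2n})\bigr)^2$, i.e. they lie exactly on the parabola $U$. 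Hence on $[\frac14,1]$ the function $r$ is precisely the inscribed polygon of the concave parabola $U$, and the area trapped between $U$ and a chord over a subinterval of width $w$ is the classical $\frac{|a|}{6}w^3=\frac{w^3}{9}$ with $a=-\frac23$. With $w_n=x_{n+1}-x_n=\frac{3}{2n(n+1)}$ this gives
$$\int_{1/4}^1\bigl(U-r\bigr)=\sum_{n=2}^\infty\frac{w_n^3}{9}=\frac38\sum_{n=2}^\infty\frac{1}{n^3(n+1)^3}.$$

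The main obstacle is evaluating this last series in closed form, and this is exactly where the $\pi^2$ enters. I would expand $\frac{1}{n^3(n+1)^3}=\bigl(\frac1n-\frac1{n+1}\bigr)^3$, rewrite the mixed terms $\frac{1}{n^2(n+1)}$ and $\frac{1}{n(n+1)^2}$ again through $\frac1n-\frac1{n+1}$, and collect everything into telescoping blocks plus two tails of $\sum 1/m^2$. Using $\zeta(2)=\frac{\pi^2}{6}$ this yields $\sum_{n\ge2}\frac{1}{n^3(n+1)^3}=\frac{79}{8}-\pi^2$, hence $\int_{1/4}^1(U-r)=\frac{237}{64}-\frac{3\pi^2}{8}$. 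Combining the three branch integrals with $\int_{1/4}^1 U=\frac{21}{32}$ and the value $\int_{-1/2}^1 L=-\frac3{10}$ gives $A_{\min}=\int_{-1/2}^1(r-L)=\frac{3\pi^2}{8}-\frac{171}{64}$, and therefore $\kappa sm\le 1-\frac13 A_{\min}=\frac{121}{64}-\frac{\pi^2}{8}$, as claimed. A useful consistency check is that $A_{\min}\approx 1.029<\frac{21}{20}=A_{\max}$, so both area bounds are positive and correctly ordered.
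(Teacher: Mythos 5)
Your proposal is correct, and it reproduces the paper's overall strategy exactly: write $\kappa sm(\phi,\rho)=1-A/3$, sandwich the area $A$ between $\int_{-1/2}^1(r-L)$ and $\int_{-1/2}^1(U-L)$ using Theorem~\ref{thm:main}, Theorem~\ref{thm:upper} and Proposition~\ref{prop:dosezeno}, and evaluate; your lower-bound computation ($\int U-L=\tfrac{21}{20}$, hence $\kappa sm\ge\tfrac{13}{20}$) is identical to the paper's. Where you genuinely diverge is in how the integral of $r$ over $[\tfrac14,1]$ is evaluated. The paper sums the trapezoid areas under each linear piece directly, $\sum_{n\ge2}\tfrac12\bigl(2-\tfrac{3}{2n^2}-\tfrac{3}{2(n+1)^2}\bigr)\bigl(\tfrac{3}{2n}-\tfrac{3}{2(n+1)}\bigr)$, and simply states the closed form $\tfrac{3\pi^2}{8}-\tfrac{195}{64}$. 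You instead observe that the nodes $\bigl(1-\tfrac{3}{2n},\,1-\tfrac{3}{2n^2}\bigr)$ lie exactly on the parabola $U$, so that $r$ restricted to $[\tfrac14,1]$ is the inscribed polygon of $U$; the deficit $\int_{1/4}^1(U-r)$ is then a sum of Archimedean parabolic-segment areas $\tfrac{|a|}{6}w_n^3=\tfrac{w_n^3}{9}$, reducing everything to $\tfrac38\sum_{n\ge2}\tfrac{1}{n^3(n+1)^3}=\tfrac38\bigl(\tfrac{79}{8}-\pi^2\bigr)$, which you evaluate by partial fractions and $\zeta(2)$ (I checked: $\sum_{n\ge1}\tfrac{1}{n^3(n+1)^3}=10-\pi^2$, so your value and hence $\int_{1/4}^1 r=\tfrac{3\pi^2}{8}-\tfrac{195}{64}$ agree with the paper). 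The two computations are of course equivalent — both ultimately rest on telescoping plus $\zeta(2)=\pi^2/6$ — but your route buys two things: it explains geometrically \emph{why} $\pi^2$ appears and why the gap between $r$ and $U$ is small (it is exactly a sum of thin parabolic segments), and it supplies the series evaluation that the paper leaves implicit; the paper's route is more direct and avoids needing the observation that the nodes lie on $U$.
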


\begin{proof}
We have 
$$ \kappa sm(\phi, \rho) = 1 - \frac{A(\phi, \rho)}{3} $$
from the definition and 
$$ A(\phi, \rho) \le \int_{-1/2}^1 \left(1-\frac23 (1-x)^2\right)dx - \int_{-1/2}^1 \left(\frac29\sqrt{3}(1+2x)^{3/2}-1\right)dx = \frac{3}{4} + \frac{3}{10} = \frac{21}{20} ,$$
so the lower bound follows. On the other hand, 
\begin{align*}
A(\phi, \rho) &\ge \int_{-1/2}^1 r(x)dx - \int_{-1/2}^1 \left(\frac29\sqrt{3}(1+2x)^{3/2}-1\right)dx \\
&= \int_{-1/2}^{-1/8} \left(2x + \frac12 -  \frac{\sqrt{3}}{9}(1+2x)^{3/2}\right)dx + \frac{9}{64} \\
& \ \ \ \ + \sum_{n=2}^{\infty} \frac12\left(1-\frac{3}{2n^2} + 1-\frac{3}{2(n+1)^2}\right)\left(\frac{3}{2n}-\frac{3}{2(n+1)}\right) + \frac{3}{10} \\
&= - \frac{21}{320} + \frac{9}{64} + \frac{3\pi^2}{8} - \frac{195}{64} + \frac{3}{10} = \frac{3\pi^2}{8} -\frac{171}{64}
\end{align*}
and the upper bound follows.
\end{proof}

Note that the exact region determined by Spearman's footrule and Spearmsn's rho is similar in shape to the exact region determined by Sperman's rho and Kendall's tau, i.e., the upper bound seems to be a piecewise function with finer and finer pieces.
However, the exact region determined by Sperman's rho and Kendall's tau is not convex while in our case the region is convex.

\bibliographystyle{amsplain}
\bibliography{biblio}

\end{document}